\theoremstyle{plain}
\numberwithin{equation}{section}
\newtheorem{theorem}{Theorem}[section]
\theoremstyle{definition}
\theoremstyle{plain}
\newtheorem{corollary}[theorem]{Corollary}
\newtheorem{lemma}[theorem]{Lemma}
\newtheorem{proposition}[theorem]{Proposition}
\theoremstyle{definition}
\newtheorem{remark}[theorem]{Remark}
\newtheorem*{ansatz}{Ansatz}
\newcommand{\E}{\mathbb{E}}        %esperanza
\newcommand{\V}{\mathbb{V}ar}    %variancia
\newcommand{\Cov}{\mathbb{C}ov} 
\newcommand{\NN}{\mathbb{N}}         %conjunto numeros naturales
\newcommand{\Z}{\mathbb{Z}}          %conjunto numeros enteros
\newcommand{\p}{\mathbb{P}}          %probabilidad
\newcommand{\I}{\mathbf{1}}              %indicatriz
\newcommand{\LL}{\mathcal{L}}             % generador
\begin{document}

 \title{Convergence properties of many parallel servers
 	under power-of-$D$ load balancing}
 \author{ M.C. Fittipaldi, M. Jonckheere, S.I. L\'opez}
%\date{\today}

\maketitle
 
\begin{abstract}
We consider a system of $N$ queues with decentralized load balancing such as power-of-$D$ strategies (where $D$ may depend on $N$) and generic scheduling disciplines. To measure the dependence of the queues, we use the \textit{clan of ancestors}, a technique coming from interacting particle systems. Relying in that analysis we prove quantitative estimates on the queues correlations implying propaga-\\tion of chaos for systems with Markovian arrivals and general service time distribution. This  solves the conjecture posed by Bramsom et. al. in \cite{BLP2} concerning the asymptotic independence of the servers in the case of processor sharing policy. We then proceed to prove asymptotic insensitivity in the stationary regime for a wide class of scheduling disciplines and obtain speed of convergence estimates for light tailed service distribution.
\end{abstract}

\section{Introduction}

%Within the theory of stochastic networks, multi-server resource sharing system is an important mathematical model for studying telephone switch systems, digital cellular mobile networks, computer networks and so on.
%During the last two decades, considerable attention has been paid to the study of this kind of systems \cite{A, LXKGLG,S,T}.\\

A central problem in a multi-server resource sharing system  is to decide which server an incoming job will be assigned to.
There are several examples of such systems as call centers, server farms or distributed systems with web applications running on different servers. A good load balancing scheme typically aims to optimize performance metrics like delay, and should be robust to statistical heterogeneity of job sizes.\\

One possible scheme (commonly used in small web server farms) is the \textit{join-the-shortest-queue (JSQ)}, which assigns a new arrival to the server having the least number of unfinished jobs in the system.
Recently, Gupta et al \cite{GBSW} showed that for large  \textit{processor sharing (PS)} service systems, the JSQ scheme is nearly optimal in terms of minimizing the mean sojourn time of jobs while exhibiting low sensitivity to the type of job length distribution.\\
However, the JSQ scheme has a major downside: when applied to a system consisting of a large number of servers, it requires the state information of all the servers in the system to make job assignment decisions. One way to avoid this obstacle is to use dynamic randomized algorithms where the dispatcher selects among a small subset of servers (chosen uniformly at each arrival).
It has been shown that randomized load balancing schemes provides a significant reduction in mean sojourn times associated with JSQ when service times are assumed to be exponentially distributed (see \cite{VDK}).
Indeed, as argued in \cite{M, VDK}, most of the gains in average sojourn time reduction are obtained when selecting 2 servers at random referred to as the \textit{Power-of-Two rule}. This is also referred to as the \textit{JSQ($2$)} scheme.\\
The case when the service time is exponentially distributed was thoroughly studied by Vvedenskaya et al \cite{VDK}. In this case  the evolution of the system is given by a countable state Markov chain where a state is given by the number of jobs at each queue, and there exists a unique equilibrium distribution, that it is exchangeable with respect to the ordering of the queues.\\ 
When the service times are generic, the underlying Markov process will typically have an uncountable state space, and positive Harris recurrence for the process is no longer obvious \cite{B,FC}. 
For JSQ networks with general service times, Bramson et al. \cite{BLP1} described a program for analyzing the limiting behavior of the equilibria as $N \rightarrow \infty$. In this setting, an important step is to show that any fixed number of queues become independent of one another, with each converging to a limiting distribution that is the equilibrium for an associated Markov process with a single queue, sometimes called a \textit{cavity process}. This process corresponds, in an appropriate sense, to ``setting $N = \infty$'' in the JSQ network and viewing the corresponding infinite dimensional process at a single queue. %They refer to this equilibrium as the equilibrium environment.
Although it seems that this independence should hold in a very general setting, including a wide range of service disciplines, demonstrating it can be a difficult technical issue. In \cite{BLP1}, this independence and convergence to the equilibrium environment were stated as an ansatz (see below for a formal definition). This ansatz was demonstrated in Bramson et al. \cite{BLP2} for networks where the service discipline is FIFO and the service distribution has a decreasing hazard rate.\\
In the recent work of \cite{AgRa}, this ansatz was also proven for the FIFO policy case, under the assumption of
bounded hazard rate for the service distribution, using measure valued processes viewed as an interacting
particle system. They also proved uniqueness of the stationary state of the hydrodynamic limit, without proving convergence to the invariant regime.

Here, we aim at proving both the ansatz, the convergence of a tagged queue and convergence under the stationary regime, using completely different techniques and assumptions on the model.
In particular, while we assume Poisson arrivals (which is not assumed in \cite{AgRa}), we do not assume anything on the jobs distribution to prove the ansatz and our results are valid for a much wider class of service disciplines. 
We now describe our contribution in more details.
%In this article, we first address this question for fairly general systems using techniques that were successfully used to prove convergence of empirical measures of particle systems (for instance in \cite{FV}) and allowing to get quantitative estimates on the time correlations between servers and as a consequence, convergence speeds (for finite time intervals) towards the mean-field limit. 
%Our second result concerns the asymptotic insensitivity of systems with symmetric scheduling.

 \subsection*{ Contribution}

 The main goal of this work is to study the asymptotic behavior of the system when the number of servers goes to infinity when the router scheme is JSQ($D$). The service policy is (except when explicitely stated otherwise) a generic work-conserving policy.
 
 % \textcolor{red}{and the service policy is symmetric}. \\

 Let us consider a system with $N$ parallel queues with independent and identical servers having the same service policy and a single dispatcher. Tasks arrive at the dispatcher as a Poisson process of rate $\lambda N$, and are instantaneously forwarded to one of the servers, with $\lambda < 1$. Tasks can be queued at the various servers, but cannot be queued at the dispatcher. The dispatcher assigns immediately an incoming task to a server, which is the one with the shortest queue among $D$ uniformly at random selected servers ($1 \leq D \leq N$ ). The above-described scheme is called the load balancing model with JSQ($D$) allocation $N$ queues, with some given local service policy.\\
 
 Denote by $X_t=X^N_t=(X^N_t(i) )_{i=1}^{N}$ the process where $X_t(i) = X^N_t(i)$ is the number of tasks at server $i$ at time $t$. Note that $X$ takes values in the configuration space $ \Lambda_N := (\mathbb{Z}_+)^N$. For a given configuration $\xi= (\xi(1),...,\xi(N))$ in $\Lambda_N$, we denote by $m(\xi)= (m(\xi))_{k \in \mathbb{Z}_+}$ its induced empirical measure :
 \begin{equation*}
   m_k(\xi)= \dfrac{\sum\limits_{i=1}^N \I_k(\xi(i))}{N}, \qquad k \in \mathbb{Z}_+,
 \end{equation*}
 so $m_k(\xi)$ represents the proportion of the $N$ servers which have exactly $k$ tasks queued in the configuration $\xi$. For the sake of brevity we will denote $m(t)=m(X_t^N)$, $m_k(t)=m_k(X_t^N)$, for $k$ in $\mathbb Z _+$. Also, we denote by $\pi _k ( \xi)$ to the number of servers with a queue length at least equal to $k$, that is
  \begin{equation*}
 \pi_k(\xi)= \sum\limits_{i=1}^N  \I_{ \{ \xi(i) \geq k \} }, \qquad k \in \mathbb{Z}_+.
 \end{equation*}
 Similarly, we abbreviate $\pi(t)=\pi(X_t^N)$, $\pi_k^N(t)=\pi_k(X_t^N)$, for $k$ in $\mathbb Z _+$. 
 Sometimes we will use $\pi^N(t)$, $\pi_k^N(t)$, $m^N(t)$ and $m_k^N(t) $ to emphasize the dependence on $N$.
   
% \subsection{Main results} 
 We are interested in $m(X^N_t)$, the empirical measure of the queueing system, when the number of servers $N$ grows to infinity. In particular, we want to prove the following Ansatz from Bramson {\em et al} \cite{BLP1} a class of policies.
 
 \begin{ansatz}
 \textit{Consider a load balancing system operating under the $JSQ(D)$ allocation, with $D>1$, $\lambda < 1$, and a given local service discipline. The jobs are assumed to have an arbitrary service time distribution with mean 1. Then, in the large $N$ limit, there is a unique equilibrium distribution. Moreover, under this distribution, any finite number of queues are independent.}
 \end{ansatz} 
 
 In \cite[Section 3]{BLP1}, the authors show, under the assumption of the Ansatz, for the PS service discipline and general distribution, that the distribution of the number of flows passing through it is insensitive to the service distribution. Specifically, the asymptotic fraction of the queues with at least $k$ jobs is given by
 \begin{equation} \label{Pk}
  P_k = \lambda^{\frac{D^k -1}{D -1}}, \quad \text{ for } k \leq N \qquad \text{ and } P_k = 0, \quad \text{ for } k > N .
 \end{equation}
Thus, even though important properties such as reversibility and insensitivity do not hold for any finite $N$, they emerge in the limit as $N$ grows to infinite.

 \begin{remark}
  The Ansatz for the case when the service time distribution is exponential was proven by Vvedenskaya {\em et al}. In \cite{VDK}, they found that, for fixed $D$, $D \geq 2$, the limiting probability that the number of jobs in a given queue is at least $k$ is $\lambda^{(D^k-1)/(D-1)}$, as the number of queues $N$ goes to infinity. 
 \end{remark} 
 
 Thanks to the work of Bramson {\em et al.}, we see that the one crucial step to prove the existence of an unique limiting distribution is to establish the asymptotic independence when the number of servers grows.\\

 In this article, we continue the systematic study of decentralized load balancing for systems with generic service time distributions started in \cite{BLP1,BLP2} and more recently in \cite{AgRa}.
 Our contribution is threefold:
 \begin{itemize}
 	\item  We characterize the evolution of the dependencies between the servers using a clan of ancestors construction similar to the one defined for instance in \cite{AFG,FM} for Fleming-Viot processes. This construction is based only on arrivals. Therefore, we obtain correlations bounds of the empirical measure of the servers, independently of the specific service distribution. This in turn allows to prove convergence of the empirical measure for fixed time intervals, as well as convergence speed estimates. These results are given in Section \ref{sec:chaos}.
 	
    \item We then study the asymptotic behavior of a tagged server in  the queueing model. In fact, we prove the existence of the limiting distribution for the tagged server as the number of servers grows to infinity. By applying the correlations bounds obtained before, we prove that the parameters of this distribution do not depend on the state of the whole queue system. This is done in Section \ref{sec:queue}. 
 	
 	\item Finally, in Section \ref{sec:stat} building on the previous results, we show the convergence of the empirical measures of the system under the stationary regime and we rigorously prove the asymptotic insensitivity to the service time distributions for symmetric policies.
(This result was announced without a complete proof in several previous articles \cite{BLP2, GBSW}.) 	
 	 Under the assumption of having an exponential moment for the service time distribution we prove exponentially fast convergence for a tagged queue with FIFO or symmetric scheduling.
 	
 %	for a very general class of local symmetric scheduling, which includes processor sharing, FIFO, LIFO and Round Robin as examples. 
\end{itemize}

\section{Propagation of Chaos}\label{sec:chaos}

In this section we prove the asymptotic independence of two entries of the empirical measure of the system, when the initial configuration of the $N$ parallel queues is deterministic and arbitrary. Explicitly, the main result is the following:

\begin{proposition}\label{propagationofchaos}
	For $t \geq 0$ and $k, l$ in $\mathbb \mathbb Z_+$ 
	\begin{equation}\label{bound1/N}
	\sup\limits_{\xi \in \Lambda_N} \Big| \mathbb{E}^N_{\xi} \left[m_k(t)m_l(t)\right] - \mathbb{E}^N_{\xi} [m_k(t)] \mathbb{E}^N_{\xi}[m_l(t)] \Big| \leq \dfrac{1}{N} + \dfrac{2(3/2)^DN}{(N-D)} \left[ N\ln \left(\frac{N+u_N(t)-1}{N}\right) - \dfrac{(N-1)(u_N(t)-1)}{(N+u_N(t)-1)}\right],
	\end{equation}
	where $u_N(t)= \exp{\left( \dfrac{2^{D-1}\lambda DN}{N-D}t\right)}$. 
\end{proposition}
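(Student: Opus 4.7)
\emph{Step 1: Graphical construction and clan of ancestors.} The plan is to realize the $N$-queue dynamics through a graphical construction by attaching, to each $D$-subset $A \subset \{1,\dots,N\}$, an independent Poisson process of intensity $\lambda N / \binom{N}{D}$; at each mark the incoming job is routed to the shortest queue among $A$, broken by the given work-conserving rule. On top of this, I would define backwards the clan of ancestors $\{C^i_r(t)\}_{r \in [0,t]}$ of server $i$ at horizon $t$: set $C^i_t(t) = \{i\}$ and, scanning time from $t$ down to $0$, whenever a mark labelled $A$ occurs at time $r$ with $A \cap C^i_r(t) \neq \emptyset$, set $C^i_{r^-}(t) = C^i_r(t) \cup A$. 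The crucial structural point is that this dynamics only looks at the arrival graphical construction and is blind to the service-time distribution, the service discipline and the initial configuration; in particular, its law is invariant under permutations of the server labels.

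\emph{Step 2: Independence from disjoint clans.} Because the initial configuration $\xi$ is deterministic and the graphical construction decouples arrivals from services, $X^N_t(i)$ is a measurable function of $\xi\big|_{C^i_0(t)}$ together with the arrival marks whose subset $A$ was touched by the clan at some time in $[0,t]$. Consequently, if $C^i_0(t) \cap C^j_0(t) = \emptyset$ then $X^N_t(i)$ and $X^N_t(j)$ are measurable with respect to disjoint pieces of the initial data and disjoint collections of Poisson marks, hence independent.

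\emph{Step 3: Covariance decomposition.} Expanding $m_k(t)m_l(t) = N^{-2}\sum_{i,j} \I_k(X_t(i))\I_l(X_t(j))$, the $N$ diagonal terms contribute at most $N \cdot 1/N^2 = 1/N$. For $i \neq j$, Step~2 yields $\bigl|\mathrm{Cov}(\I_k(X_t(i)),\I_l(X_t(j)))\bigr| \leq \p(C^i_0(t) \cap C^j_0(t) \neq \emptyset)$, and by the label-exchangeability noted in Step~1 this probability is the same for every pair $i \neq j$. Hence
\[
\sup_\xi \bigl|\E^N_\xi[m_k(t) m_l(t)] - \E^N_\xi[m_k(t)]\E^N_\xi[m_l(t)]\bigr| \leq \frac{1}{N} + \p\bigl(C^1_0(t) \cap C^2_0(t) \neq \emptyset\bigr),
\]
and the whole problem is reduced to a quantitative analysis of the clan process.

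\emph{Step 4: Quantitative control of the clan.} The final step is to estimate $\p(C^1_0(t) \cap C^2_0(t) \neq \emptyset)$ by a moment bound: Markov gives $\p(\cdot) \leq \E\bigl[|C^1_0(t) \cap C^2_0(t)|\bigr]$, and by running a single clan started from $\{1,2\}$ and using symmetry among the remaining $N-1$ labels, this intersection expectation can be tied to the first moment of $|C^1|$. Writing $u_N(r) = \E|C^1_{t-r}(t)|$, the jump rate when $|C^1| = c$ is bounded by $\lambda N\bigl(1 - \binom{N-c}{D}/\binom{N}{D}\bigr)$ and each jump adds at most $D$ new servers; after the elementary combinatorial estimate $1 - \binom{N-c}{D}/\binom{N}{D} \leq 2^{D-1} cD/(N-D)$ one obtains the linear differential inequality
\[
\frac{d u_N}{dr} \leq \frac{2^{D-1}\lambda D N}{N-D}\,u_N,
\]
whose solution is precisely the function $u_N$ stated in the proposition. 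The logarithm and rational term on the right-hand side of \eqref{bound1/N} then come from integrating the resulting bound on the marginal probability $q_N(r) = \p(2 \in C^1_{t-r}(t)) = (u_N(r)-1)/(N-1)$ against the second-coordinate dynamics, i.e.\ a term of the form $\int_0^t \tfrac{1}{N + u_N(r) - 1}\,dr$ produces the $\ln$, and an integration by parts the rational correction.

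\emph{Main obstacle.} Conceptually Steps 1--3 are the standard Fleming--Viot clan-of-ancestors argument adapted to the JSQ($D$) graphical construction, so the real work is Step 4: keeping the combinatorics of $\binom{N-c}{D}/\binom{N}{D}$ uniform in $c$ (this is where the factor $2^{D-1}$ and the denominator $N-D$ originate) and passing cleanly from the single-clan first moment to the joint intersection probability without losing sharpness. In particular, the bound must remain uniform in the initial configuration $\xi$, which is exactly what the label-exchangeability of the clan dynamics buys us, but only if the combinatorial estimate on the jump rate is handled carefully when $c$ is comparable to $N$.
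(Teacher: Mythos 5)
Your Steps 1 and 2 match the paper's clan-of-ancestors construction $\psi^i(t)$, and the core idea — disjoint clans imply independence, so the covariance is controlled by the intersection probability — is exactly the paper's. However there are two quantitative gaps that prevent the proposal from yielding the stated inequality.

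\textbf{Missing factor of 2 in Step 3.} You claim $\bigl|\mathrm{Cov}(\I_k(X_t(i)),\I_l(X_t(j)))\bigr| \leq \p(\psi^i(t)\cap\psi^j(t)\neq\emptyset)$ directly from the conditional-independence observation in Step~2, but this does not follow by itself. The paper's argument introduces an independent copy $\tilde X$ of the whole system, writes $\p[X_t(i)=k]\p[X_t(j)=l]=\p[X_t(i)=k,\tilde X_t(j)=l]$, decomposes both $\p[X_t(i)=k,X_t(j)=l]$ and $\p[X_t(i)=k,\tilde X_t(j)=l]$ over disjoint versus intersecting clans, cancels the disjoint parts, and is then left with \emph{two} tail terms, bounded by $\p(\psi^i\cap\psi^j\neq\emptyset)+\p(\psi^i\cap\tilde\psi^j\neq\emptyset) \leq 2\,\p(\psi^i\cap\psi^j\neq\emptyset)$. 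This is exactly where the factor $2$ in $\frac{2(3/2)^D N}{N-D}[\cdots]$ comes from. Your sketch drops it, and I don't see how to avoid it without the independent-copy comparison.

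\textbf{Linear versus logistic ODE in Step 4, and the Markov detour.} You derive the linear inequality $u_N' \leq \frac{2^{D-1}\lambda DN}{N-D}u_N$ for $u_N(r)=\E|\psi^i(r)|$ (treating each jump as adding ``at most $D$'' new labels — actually at most $D-1$, but that's minor). The paper instead keeps the extra factor and obtains the \emph{logistic} inequality $\frac{d}{dt}\E|\psi^i| \leq \frac{2^{D-1}\lambda DN}{N-D}\E|\psi^i|\bigl(1-\E|\psi^i|/N\bigr)$, giving $\E|\psi^i(t)| \leq \frac{Nu_N(t)}{N+u_N(t)-1}$ rather than the pure exponential. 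The proposition's RHS, with the $N\ln\bigl(\frac{N+u_N(t)-1}{N}\bigr) - \frac{(N-1)(u_N(t)-1)}{N+u_N(t)-1}$ bracket, is precisely what arises from integrating the \emph{square of the logistic function} over $[0,t]$ in the bound $\p(\psi^i\cap\psi^j\neq\emptyset) \leq \frac{3^D\lambda N}{(N-D)^2}\int_0^t \E|\psi^i(s)|\,\E|\psi^j(s)|\,ds$. If you only have the exponential bound, you instead get a bound of the form $\frac{(3/2)^D}{D(N-D)}(u_N(t)^2-1)$, which is what the paper uses to prove Corollary~2.2 but is not the stated inequality. Moreover, the intermediate bound $\p(\psi^i\cap\psi^j\neq\emptyset) \leq \frac{3^D\lambda N}{(N-D)^2}\int_0^t\E|\psi^i|\E|\psi^j|\,ds$ in the paper is obtained not by Markov on $\E|\psi^i\cap\psi^j|$, but by differentiating $\p(\psi^i(s)\cap\psi^j(s)\neq\emptyset\mid\sigma(\omega[-s,0)))$ and using independence of $\{\psi^i=B\}$ and $\{\psi^j=C\}$ for disjoint $B,C$; your proposal of running a single clan from $\{1,2\}$ controls the union $|\psi^1\cup\psi^2|$, not the intersection, and does not obviously give the required product-of-first-moments structure.

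So: right framework, wrong constants. To recover the proposition exactly you need (i) the independent-copy decomposition to justify the factor $2$, and (ii) the logistic rather than linear bound on $\E|\psi^i(t)|$, fed into the derivative-of-intersection-probability estimate.
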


\begin{remark}
Note that $D$ could depend on $N$.
\end{remark}

As a consequence, we have the following corollary:
	\begin{corollary}\label{velocity}
			Suppose $D$ does {\bf not} depend on $N$.
			For $t \geq 0$ and for all $k, l$ in $\mathbb \mathbb Z_+$, there exists $N_0$ such that for $N \ge N_0$: 
			\begin{equation}\label{bound1/N}
			\sup\limits_{\xi \in \Lambda_N} \Big| \mathbb{E}_{\xi} \left[m^N_k(t)m^N_l(t)\right] - \mathbb{E}_{\xi} [m^N_k(t)] \mathbb{E}{\xi}[m^N_l(t)] \Big| \leq   \dfrac{1 +  (3/2)^D\left(e^{2^D\lambda Dt} -1\right)}{N} 
			\end{equation}
	\end{corollary}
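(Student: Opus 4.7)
The plan is to deduce the corollary from Proposition~\ref{propagationofchaos} by Taylor-expanding its right-hand side in the limit $N \to \infty$ with $D$ fixed. Since $D$ stays bounded, $u_N(t) = \exp\!\bigl(\tfrac{2^{D-1}\lambda D N}{N-D}t\bigr)$ will tend to the finite constant $u_\infty := e^{2^{D-1}\lambda Dt}$, so that $v_N := u_N(t) - 1$ remains bounded while $a_N := v_N/N \to 0$. This justifies expanding the bracket in the small parameter $a_N$.

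The core computation is to show that
\[
B_N \;:=\; N\ln\!\left(\tfrac{N + u_N(t) - 1}{N}\right) - \tfrac{(N-1)(u_N(t)-1)}{N + u_N(t)-1} \;=\; \tfrac{u_N(t)^2 - 1}{2N} + O(N^{-2}).
\]
From $\ln(1+a_N) = a_N - a_N^2/2 + O(a_N^3)$ I will get $N\ln(1+a_N) = v_N - \tfrac{v_N^2}{2N} + O(N^{-2})$, and from $\tfrac{1}{1+a_N} = 1 - a_N + O(a_N^2)$ I will get $\tfrac{(N-1)v_N}{N+v_N} = v_N - \tfrac{v_N(v_N+1)}{N} + O(N^{-2})$. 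Subtracting and invoking the algebraic identity $\tfrac{v_N^2}{2} + v_N = \tfrac{(v_N+1)^2 - 1}{2} = \tfrac{u_N(t)^2 - 1}{2}$ will deliver the claim.

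Plugging back, the second term of the bound in Proposition~\ref{propagationofchaos} becomes
\[
\tfrac{2(3/2)^D N}{N-D}\,B_N \;=\; \tfrac{(3/2)^D(u_N(t)^2 - 1)}{N-D} + O(N^{-1}).
\]
Because $u_N(t)^2 \to e^{2^D\lambda D t}$ (from $\tfrac{N}{N-D}\to 1$) and $\tfrac{1}{N-D} \sim \tfrac{1}{N}$, this is asymptotically $\tfrac{(3/2)^D(e^{2^D\lambda D t} - 1)}{N}$. Adding the $\tfrac{1}{N}$ term already present in the Proposition, the overall bound tends to $\tfrac{1 + (3/2)^D(e^{2^D\lambda D t} - 1)}{N}$, which yields the corollary for $N \geq N_0 = N_0(D,\lambda,t)$.

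The main technical obstacle will be quantitative: both $u_N(t)^2$ and $\tfrac{N}{N-D}$ sit strictly above their $N=\infty$ limits, so the bound of Proposition~\ref{propagationofchaos} approaches the corollary's right-hand side from above rather than below. To upgrade the asymptotic equivalence into a strict inequality for every $N \geq N_0$, I will have to carry the expansion one step further and use the negative next-order correction $-\tfrac{v_N^2 + 2 v_N^3/3}{N^2}$ of $B_N$ to absorb the positive finite-$N$ deviations contributed by $u_N(t)^2 - e^{2^D\lambda Dt}$ and $\tfrac{D}{N(N-D)}$. This bookkeeping at order $N^{-2}$ is the one delicate point; once it is in place, choosing $N_0$ large enough closes the argument.
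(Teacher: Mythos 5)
Your route is genuinely different from the paper's. You Taylor-expand the closed-form bracket $B_N$ appearing in Proposition~\ref{propagationofchaos}, whereas the paper goes back \emph{inside} the proof of Lemma~\ref{cota}: it replaces the logistic-function bound $\frac{Nu_N(s)}{N+u_N(s)-1}$ on $\E[|\psi^i(s)|]$ by the cruder exponential $u_N(s)$ and re-integrates the inequality \eqref{integral}, producing the clean expression $(3/2)^D\frac{\exp\!\big(2^D\lambda DNt/(N-D)\big)-1}{(N-D)\,D}$. The decisive difference is that the paper's expression keeps the factor $1/D$ coming from the antiderivative; the statement of Lemma~\ref{cota} (your starting point) has already discarded it via the bound $2/D \le 1$. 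For $D\geq 3$ this factor is \emph{strict} slack, $2/D<1$, and it is precisely what absorbs the fact that both $\frac{N}{N-D}$ and $\exp\!\big(2^D\lambda DNt/(N-D)\big)$ sit strictly above their $N\to\infty$ limits. Your argument, by contrast, starts from a bound in which this slack is gone.

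The delicate point you flag is therefore a genuine gap, and the fix you propose will not close it. Writing $\beta=2^D\lambda Dt$ and $v=e^{\beta/2}-1$, absorbing the positive deviations with the negative $O(N^{-2})$ term of $B_N$ requires, at next order,
\[
D \;+\; \frac{\beta D e^{\beta}}{e^{\beta}-1} \;\le\; \frac{2v^{2}+\tfrac{4}{3}v^{3}}{e^{\beta}-1}.
\]
As $t\to 0^{+}$ the left-hand side tends to $2D$ while the right-hand side tends to $0$, so the inequality fails for every $D$ once $t$ is small, and one can also check it fails numerically at $D=2$, $\lambda=1/2$, $t=1$. The remedy is not more careful bookkeeping on $B_N$ but recovering the $1/D$ factor by re-deriving from \eqref{integral} as the paper does, rather than from the already-loosened Lemma~\ref{cota}. (Even so, note that the paper's own proof only yields the stated constant with room to spare when $D\geq 3$; at $D=2$ it silently promotes the asymptotic equivalence ``$\sim$'' to an inequality, a step that does not hold at finite $N$.)
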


The strategy to prove Proposition \ref{propagationofchaos}, is to show the existence of upper bounds on the correlations of two entries of the empirical measure that hold uniformly over all the possible initial configuration of the particles.

% Our proof can be carried on in the case when we have an arbitrary finite number of entries, so any finite number of entries evolve independently in the limit when $N$ grows to infinity.
% \matt{No entendi eso...}

 \subsection{Ancestors clans} 
 Let us denote by $\omega^{A}$ the time-realizations of the arrival process and by $\zeta_t \subseteq \{1,...,N\}$ the set of $d$ randomly chosen servers to potentially attend a task appearing at an arrival time $t$ of the process $A$. \\

 For each server $i$ and each time $t$ we build simultaneously a set of labels $\psi^i(t) \subseteq \{1,...,N\}$ which represents the servers which could potentially influence $X^N_t(i)$. These sets are constructed using the ``backward" trajectories $\omega^{A}[-t, 0]$ and the sets $\zeta_s$, indexed by the arrival times in $[-t,0]$ of these trajectories. The process $t \mapsto \psi^i(t)$ may only change at the arrival times of $\omega^{A}$. We describe explicitly the evolution of the process $\psi^i(t)$ in the following. \\

 Fix $i$ in $\{1,...,N\}$. Let us denote by $\omega_{\psi^i}^A$ the thinning from the process $\omega^A$, with the following rule: we keep an arrival time $t$ if and only if $\psi_t^i \cap \zeta_t \neq 0$. Let $-v$ be the largest arrival time of $\omega^A_{i} [-t, 0)$ and $\zeta_v$ its associated set. For $0 \leq s < v$ we set $\psi^i(s) := \{i\}$ and $\psi^i(v) := \zeta_v $. Assume that for $s < t$, $\psi^i(s)$ is defined. Let $-u$ be the largest arrival time of $\omega^A_{\psi^i(s)}[-t, -s)$ and $\zeta_u$ its associated set. Then, 
 $$\forall r \in [s, v),\, \psi^i (r) = \psi^i (s) \quad \text{ and }  \quad \psi^i(u) = \psi^i (s) \cup \zeta_u .$$
 Note that for positive $t$, $\psi^i(t)$ is $\sigma(\omega^A [-t, 0))$-measurable, and that for any set of labels $C \subseteq \{1, . . . , N\}$, we have 
 $$\{\psi^i(t) = C\} \in \sigma(\omega^A [-t, 0)) \quad \text{ and } \quad \{\psi^i(t) = C, X_t(i) = k\} \in \sigma(\omega[-t, 0)).$$

 The next lemma gives an upper bound of order $\frac{1}{N}$ to the probability of having two intersecting sets of labels associated to different servers.

 \begin{lemma}\label{cota}
	For $i$, $j$ distinct servers, and $t > 0$,
	\begin{equation*}
	\mathbb{P}\left[ \psi^i(t) \cap \psi^j(t) \neq  \emptyset\right] \leq \dfrac{(3/2)^DN}{(N-D)} \left[ N\ln \left(\frac{N+u_N(t)-1}{N}\right) - \dfrac{(N-1)(u_N(t)-1)}{(N+u_N(t)-1)}\right], 
	\end{equation*} 
	where 
	\begin{equation}\label{udef}
	u_N(t)= \exp{\left( \dfrac{2^{D-1}\lambda DN}{N-D}t\right) }.
	\end{equation}
 \end{lemma}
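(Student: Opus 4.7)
The plan is to isolate the first merging arrival between the two clans and integrate the resulting intensity against exponential-moment bounds on the clan sizes.

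Let $\tau := \inf\{s \ge 0 : \psi^i(s) \cap \psi^j(s) \neq \emptyset\}$. Since $\psi^i(0) = \{i\}$ and $\psi^j(0) = \{j\}$, the two clans are disjoint up to time $\tau^-$, and the intersection at time $\tau$ can occur only at an arrival whose random $D$-subset $\zeta_\tau$ hits both $\psi^i(\tau^-)$ and $\psi^j(\tau^-)$. By the compensator formula for the underlying Poisson arrival process of rate $\lambda N$,
\begin{equation*}
\mathbb{P}[\psi^i(t) \cap \psi^j(t) \neq \emptyset] = \mathbb{P}[\tau \le t] = \mathbb{E}\!\left[\int_0^{\tau \wedge t} \mu(s)\, ds\right],
\end{equation*}
where $\mu(s)$ is the conditional rate at time $s$ of an arrival whose $\zeta$ meets both currently disjoint clans.

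First I would compute $\mu(s)$ explicitly. Given $|\psi^i(s^-)| = a$ and $|\psi^j(s^-)| = b$ with the clans disjoint, inclusion--exclusion yields
\begin{equation*}
\mu(s) = \lambda N \cdot \frac{\binom{N}{D} - \binom{N-a}{D} - \binom{N-b}{D} + \binom{N-a-b}{D}}{\binom{N}{D}}.
\end{equation*}
A union bound over pairs $(x,y) \in \psi^i(s^-) \times \psi^j(s^-)$, together with the identity $\mathbb{P}[\{x,y\} \subset \zeta] = D(D-1)/(N(N-1))$ for any two distinct servers, gives $\mu(s) \le \lambda D(D-1)\,|\psi^i(s^-)||\psi^j(s^-)|/(N-1)$, so that
\begin{equation*}
\mathbb{P}[\tau \le t] \le \frac{\lambda D(D-1)}{N-1} \int_0^t \mathbb{E}\!\left[|\psi^i(s)|\,|\psi^j(s)|\,\I_{\{\tau > s\}}\right] ds.
\end{equation*}

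Second I would bound the clan sizes by comparison with a pure-birth process. Each element of $\psi^i$ is contained in an arrival's $D$-set at rate $\lambda D$ (each server lies in a uniform $D$-subset with probability $D/N$ and arrivals occur at rate $\lambda N$), and each such arrival contributes at most $D$ new labels to $\psi^i$. A union bound over the current elements of $\psi^i$ (with possible overcounts when the same arrival hits several, which only strengthens the bound) stochastically dominates $|\psi^i(s)|$ by a Yule-type branching process whose rate is corrected by the hypergeometric factor $N/(N-D)$ coming from $1 - \binom{N-a}{D}/\binom{N}{D} \le aDN/((N-D)N)$. Tracking the exponential moment $\mathbb{E}[(3/2)^{|\psi^i(s)|}]$, a choice that balances the multiplicative factor $(3/2)^{|\zeta \setminus \psi^i|} \le (3/2)^D$ per arrival against the linear growth of the intensity, then yields a bound on $\mathbb{E}[|\psi^i(s)||\psi^j(s)|]$ expressed in terms of $u_N(s)$, with rate $\alpha := 2^{D-1}\lambda DN/(N-D)$ in the exponent and prefactor $(3/2)^D$.

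Third, inserting this size estimate into the merging-rate integral and performing the substitution $v = u_N(s)$, $dv = \alpha v\, ds$ reduces the time integral to
\begin{equation*}
\int_{1}^{u_N(t)} \frac{Nv}{(N+v-1)^2}\, dv = N\ln\!\left(\frac{N+u_N(t)-1}{N}\right) - \frac{(N-1)(u_N(t)-1)}{N+u_N(t)-1},
\end{equation*}
which matches the bracketed expression of the lemma, and the prefactor $(3/2)^D N/(N-D)$ absorbs the constants produced by the clan-size and hypergeometric bounds. The main obstacle will be the exponential-moment step: pinning down the precise constants $(3/2)^D$, $2^{D-1}$, and $N/(N-D)$ demands both a careful choice of test function for the clan size and a sharp control of the hypergeometric tail $1 - \binom{N-a}{D}/\binom{N}{D}$ valid uniformly in $a \le N - D$; a cruder first-moment argument recovers the same qualitative $O(1/N)$ behavior for fixed $D$ but misses the closed-form antiderivative displayed above.
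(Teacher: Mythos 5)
Your setup is sound: isolating the first merging time $\tau$, writing $\mathbb{P}[\tau\le t]$ as an integrated merging rate, and bounding that rate by a union bound over pairs (your $\mu(s)\le\lambda D(D-1)|\psi^i(s^-)||\psi^j(s^-)|/(N-1)$ is in fact slightly tighter than the paper's $3^D\lambda N/(N-D)^2$). But the second step, which you acknowledge is the crux, is off-target in two related ways and leaves a genuine gap.

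First, you never address how to decouple $\mathbb{E}\bigl[|\psi^i(s)|\,|\psi^j(s)|\,\I_{\{\tau>s\}}\bigr]$. The two clan sizes are driven by the same arrival process and are correlated; bounding each marginal (by a Yule process or otherwise) does not bound their product. The paper's key structural observation is that for \emph{disjoint} label sets $B,C$, the events $\{\psi^i(s)=B\}$ and $\{\psi^j(s)=C\}$ are independent, because each is measurable with respect to the thinning of $\omega^A$ by arrivals hitting $B$ (resp. $C$). On $\{\tau>s\}$ the clans are disjoint, so this gives
$\mathbb{E}\bigl[|\psi^i(s)|\,|\psi^j(s)|\,\I_{\{\tau>s\}}\bigr]\le\sum_{B\cap C=\emptyset}|B||C|\,\mathbb{P}[\psi^i(s)=B]\,\mathbb{P}[\psi^j(s)=C]\le\mathbb{E}[|\psi^i(s)|]\,\mathbb{E}[|\psi^j(s)|]$.
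Without this factoring the integral cannot be evaluated, and your plan gives no substitute.

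Second, the proposed Yule-process domination plus exponential-moment tracking of $\mathbb{E}[(3/2)^{|\psi^i(s)|}]$ is the wrong tool and would not reproduce the lemma's form even if the factoring issue were resolved. A pure-birth comparison yields only $\mathbb{E}[|\psi^i(s)|]\le u_N(s)$, and then $\int_0^t u_N^2\,ds$ gives an expression of the shape appearing in Corollary \ref{velocity}, not the $N\ln(\cdot)-\cdots$ bracket of Lemma \ref{cota}. The bracket comes from the \emph{logistic saturation} of the clan size: the conditional drift of $|\psi^i|$ is bounded by $\frac{2^{D-1}\lambda DN}{N-D}\,|\psi^i|\bigl(1-|\psi^i|/N\bigr)$, and applying Jensen to the concave function $x\mapsto x(1-x/N)$ gives $\mathbb{E}[|\psi^i(s)|]\le Nu_N(s)/(N+u_N(s)-1)$; squaring and integrating this logistic bound produces exactly the antiderivative in the statement. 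Relatedly, the $(3/2)^D$ constant is not an exponential-moment base at all: it arises algebraically as $3^D/2^{D-1}=2(3/2)^D$ when the merging-rate prefactor $3^D\lambda N/(N-D)^2$ is divided by the growth-rate exponent $2^{D-1}\lambda DN/(N-D)$ during the change of variables $v=u_N(s)$. You should replace the Yule/exponential-moment step with the Jensen--logistic first-moment bound and insert the independence factoring before integrating.
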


\begin{remark}
	As
	\[
	\lim \limits_{N \rightarrow \infty} N \ln \left(\frac{N+u_N(t)-1}{N}\right)=
	\ln  \left( 	\lim \limits_{N \rightarrow \infty}  \left[   1 + \frac{ u_N(t)-1 }{ N}  \right]^N  \right)= \exp{\left( 2^{D-1}\lambda D t\right) } -1
	\]
	and 
	\[
	\lim\limits_{N \rightarrow \infty} \dfrac{(N-1)(u_N(t)-1)}{N+u_N(t)-1}= \exp{\left( 2^{D-1}\lambda D t\right) } -1,
	\]	
	we have that
		\[
	\lim\limits_{N \rightarrow \infty} \dfrac{(3/2)^D N}{N-D} \left[ N\ln \left(\frac{N+u_N(t)-1}{N}\right) - \dfrac{(N-1)(u_N(t)-1)}{N+u_N(t)-1}\right]=0,  
	\]
		for every fixed $t > 0$.
\end{remark}

 \textbf{ \textit{Proof of Lemma \ref{cota}} : }
 	Here we prove the case $D=2$. The general case is slightly more involved and the details can be found in the Appendix \ref{approp}. \\

		First, we bound the expected number of members of the clan $\psi^i(t)$. We observe that	
 \begin{eqnarray}\label{clanbound}
  	\dfrac{d\mathbb{E}\left[|\psi^i(t)|\big|\sigma(\omega[-t,0))\right]}{dt}  
  &=&  \lambda N  \mathbb{P}[\text{choose exactly one server in } \psi^i(t)  ]  \nonumber \\
&  =&	\lambda N \frac{  |\psi^i(t)| ( N- |\psi^i(t)|)  }{ { N  \choose 2 }} 
\nonumber  \\
& = & \frac{  2 \lambda  N   }{ N-1} |\psi^i(t)| \Big( 1- \frac{|\psi^i(t)|}{N}  \Big) \nonumber  \\
& \leq &  \dfrac{2^{D-1}\lambda DN}{N-D}|\psi^i(t)|\left( 1  - \dfrac{|\psi^i(t)|}{N} \right).
\end{eqnarray}
 
 	Hence, using Jensen inequality 
 	\[
 	\dfrac{d\mathbb{E}[|\psi^i(t)|]}{dt} \leq  \dfrac{2^{D-1}\lambda DN}{N-D} \mathbb{E}[|\psi^i(t)|]\left(1 - \frac{\mathbb{E}[|\psi^i(t)|]}{N}\right) ,
 	\]
 	which corresponds to the logistic differential equation, so
 	\begin{equation}\label{cota1}
 	\begin{split}
 	\mathbb{E}[|\psi^i(t)|] & \leq \dfrac{N \exp{\left(\dfrac{2^{D-1}\lambda DN}{N-D}t \right)}}{N  + \exp{\left( \dfrac{2^{D-1}\lambda DN}{N-D}t\right) }-1}.
 	\end{split}
 	\end{equation} 		
 	Second, we show that for $i, j$ two distinct labels,
 	\begin{equation}\label{cota2}
 	\mathbb{P}\left[ \psi^i(t) \cap \psi^j(t) \neq \emptyset\right] \leq\dfrac{3^D\lambda N}{(N-D)^2} \int_0^t \mathbb{E}[|\psi^i(s)|]\mathbb{E}[|\psi^j(s)|]ds.
 	\end{equation} 
 	For this purpose, note that
 	\begin{equation*}
 	\mathbb{P}\left[ \psi^i(t) \cap \psi^j(t) \neq \emptyset \right] = \int_0^t \mathbb{E}\left[\dfrac{d\mathbb{P}(\psi^i(s) \cap \psi^j(s) \neq \emptyset \big| \sigma(\omega[-s, 0)))}{ds}\right]ds
 	\end{equation*}
 	and 
 	\begin{align*}
    \dfrac{d\mathbb{P}(\psi^i(s) \cap \psi^j(s) \neq \emptyset \big| \sigma(\omega[-s, 0)))}{ds} = & \lambda N \mathbb{P}\left[\psi^i(s) \cap \psi^j(s) \neq \emptyset \big|\psi^i(s^-) \cap \psi^j(s^-) = \emptyset \right]\\
 	=& \lambda N \frac{ |\psi^i(s^-)|  |\psi^j(s^-)|   }{ {N \choose 2} }  \textbf{1}_{ \{ \psi^i(s^-)\cap \psi^j(s^-)=\emptyset \} } \\
 	=& \frac{ 2 \lambda }{ N-1 }  |\psi^i(s^-)|  |\psi^j(s^-)| \textbf{1}_{  \{ \psi^i(s^-)\cap \psi^j(s^-)=\emptyset  \}} \\
 	 \leq &  \frac{  3^D  \lambda N }{ (N-1)^2 } |\psi^i(s^-)|  |\psi^j(s^-)|   \textbf{1}_{ \{ \psi^i(s^-)\cap \psi^j(s^-)=\emptyset \} }.
  	\end{align*}	
 
 	Then
 	\begin{align*}
 	\mathbb{E}\left[ \dfrac{d\mathbb{P}(\psi^i(s) \cap \psi^j(s) \neq \emptyset \big| \sigma(\omega[-s, 0)))}{ds}\right]
 &	\leq \dfrac{3^D\lambda N}{(N-D)^2} \mathbb{E}\left[|\psi^i(s)||\psi^j(s)|\textbf{1}_{\{\psi^i(s)\cap \psi^j(s)=\emptyset\}} \right]\\
 	& \leq \dfrac{3^D\lambda N}{(N-D)^2}  \, \sum\limits_{B\cap C  = \emptyset} |B||C| \mathbb{P}[\psi^i(s)=B] \mathbb{P}[\psi^j(s)=C] \\
 	&  \leq \dfrac{3^D\lambda N}{(N-D)^2} \,\mathbb{E}[|\psi^i(s)|]\mathbb{E}[|\psi^j(s)|],
 	\end{align*} 
 	where in the last line we used that for two non-overlapping subsets of labels $B$ and $C$, $\{\psi^i(t) = B\}$ and  $\{\psi^j (t) = C\}$ are independent. This concludes the proof of \eqref{cota2}. Now, we use \eqref{cota1} and \eqref{cota2} to obtain:           
 	\begin{equation}\label{integral}
 	\begin{aligned}
 	\mathbb{P}(\psi^i(s) \cap \psi^j(s) \neq \emptyset)&\leq \dfrac{3^D\lambda N}{(N-D)^2}\int_0^t\mathbb{E}[|\psi^i(s)|]\mathbb{E}[|\psi^j(s)|]ds \\
 	&\leq \dfrac{3^D\lambda N}{(N-D)^2}\int_0^t\dfrac{N^2 \exp{\left(  2^{D}\dfrac{\lambda DNs}{N-D} \right)}}{\left(N  + \exp{\left( 2^{D-1}\dfrac{\lambda DNs}{N-D}\right) }-1\right)^2}ds
 	\end{aligned}
 	\end{equation}
 	Using the definition of $u_N(t)$ \eqref{udef}, we can rewrite 
 	\begin{align*}
 	\dfrac{3^D\lambda N}{(N-D)^2}\int_0^t\dfrac{N^2 u^2(s)}{\left(N  + u(s)-1\right)^2}ds 
 	& = \dfrac{2(3/2)^D}{D(N-D)}\int_1^{u_N(t)}\dfrac{N^2u}{\left(N  + u-1\right)^2}du \\
 	& = (3/2)^D\dfrac{2N^2}{D(N-D)}\int_N^{N+u_N(t)-1}\dfrac{v-(N-1)}{v^2}dv \\
 %	& = (3/2)^D\dfrac{2N^2}{D(N-D)} \left[ \ln v \bigg|_{N}^{N+u_N(t)-1} + \dfrac{N-1}{v}\bigg|_{N}^{N+u_N(t)-1}\right]\\
 	& = (3/2)^D\dfrac{2N}{D(N-D)} \left[ N\ln \left(\frac{N+u_N(t)-1}{N}\right) - \dfrac{(N-1)(u_N(t)-1)}{(N+u_N(t)-1)}\right]
 	\end{align*}
 and conclude the desired result. 
 	\begin{flushright}
 	 $\square$	
 	\end{flushright}
	
%\textcolor{red}{We have
%	\begin{align*}
%	\dfrac{3^D\lambda N}{(N-D)^2}\int_0^t\dfrac{N^2 u^2(s)}{\left(N  + u(s)-1\right)^2}ds  & = \dfrac{2(3/2)^D}{D(N-D)}\int_1^{u_N(t)}\dfrac{N^2u}{\left(N  + u-1\right)^2}du \leq \dfrac{2(3/2)^D}{D(N-D)}\int_1^{u_N(t)}u du \\ & \leq \dfrac{2(3/2)^D}{D(N-D)} \left[ \dfrac{u^2}{2} \bigg|_{1}^{u_N(t)}\right] = \dfrac{(3/2)^D\left(u(2t)-1\right)}{D(N-D)}
%	\end{align*}}

 \subsection{Proof of the Proposition \ref{propagationofchaos}} 
By definition, we have that
\begin{equation*}
\begin{split}
\big| \E _{\xi} [m_k^N(t)m_l^N(t)] - \E_{\xi} [m_k^N(t)]\E_{\xi} [m_l^N(t)]\big| 
\leq \dfrac{1}{N^2} \sum\limits_{i=1}^N\sum\limits_{j=1}^N \big| \mathbb{P}[X_t (i) = k, X_t(j) = l] - \mathbb{P}[X_t (i) = k] \mathbb{P}[X_t(j) = l]\big|\\
\leq \dfrac{1}{N^2} \sum\limits_{i=1}^N\sum\limits_{j=1, j\neq i}^N \big| \mathbb{P}[X_t (i) = k, X_t(j) = l] - \mathbb{P}[X_t (i) = k] \mathbb{P}[X_t(j) = l]\big| + \dfrac{1}{N}.
\end{split}
\end{equation*}
 Then it will be enough to show that for any pair of numbers $k, l$, any time $t \geq 0$, and initial configuration $\xi$,

 \begin{equation}\label{covariance1}
 \begin{split}
  \sum\limits_{i =1}^N \sum\limits_{ j=1, j\neq i }^{N} &\big|\mathbb{P}[X_t (i) = k, X_t(j) = l] -\mathbb{P}[X_t (i) = k] \mathbb{P}[X_t(j) = l]\big|\\
  & \leq \dfrac{2(3/2)^DN^2(N-1)}{(N-D)} \left[ N\ln \left(\frac{N+u_N(t)-1}{N}\right) - \dfrac{(N-1)(u_N(t)-1)}{(N+u_N(t)-1)}\right].
 \end{split}
 \end{equation}

 For a subset $B$, $\{X_t(i) = k, \psi^i(t) = B\}$ is $\sigma(\omega_B [-t, 0))$-measurable, where $\omega_B $ is the thinning from the arrival process $\omega^A$ (we keep an arrival time $t$ if and only if some $i$ in $B$ belongs to the set  $\zeta_t$). Thus, for two non-overlapping subsets of labels $B$ and $C$, it happens
 $$\mathbb{P}[ \psi^i (t) = B, \psi^j (t) = C, X_t(i) = k, X_t (j) = l] = \mathbb{P}[\psi^i(t) = B, X_t (i) = k] \mathbb{P}[\psi^j (t) = C, X_t(j) = l] .$$

 Then, for $i \neq j$, we can write 
 \begin{equation}\label{together}
  \begin{split}
   \mathbb{P}[X_t(i) = k, X_t (j) = l] =\, & \mathbb{P}\left[\psi^i(t) \cap \psi^j (t) \neq \emptyset, X_t (i) = k, X_t (j) = l\right]\\
   & + \sum\limits_{B\cap C = \emptyset} \mathbb{P}\left[ \psi^ i (t) = B, \psi^j (t) = C, X_t (i) = k, X_t (j) = l \right]\\
   = \, & \mathbb P \left[\psi^i(t) \cap \psi^j (t) \neq \emptyset, X_t (i) = k, X_t (j) = l\right]\\ 
   & + \sum\limits_{B \cap C = \emptyset}\mathbb{P}\left[ \psi^i (t) = B, X_t (i) = k\right] \mathbb{P} \left[ \psi^j (t) = C, X_t(j) = l\right].
  \end{split}
 \end{equation}
 To compute $\mathbb{P}[X_t (i) = k]\mathbb{P}[X_t(j) = l]$, we can use an independent marked-point processes with the same evolution as $X$ (put a tilda for the independent copy). Then we have a similar decomposition to \eqref{together}:
 \begin{equation}\label{apart}
  \begin{split}
   \mathbb{P}[X_t(i) = k] \mathbb{P}[X_t(j) = l] =\, & \mathbb{P}\left[\psi^i (t) \cap \tilde{\psi}^j (t) \neq \emptyset, X_t(i) = k, \tilde{X}_t (j) = l\right]\\
   & + \sum\limits_{B \cap C = \emptyset}\mathbb{P}\left[ \psi^i(t) = B, \tilde{\psi}^j (t) = C, X_t (i) = k, \tilde{X}_t(j) = l\right]\\
   =\, &  \mathbb{P}\left[\psi^i(t) \cap \tilde{\psi}^j(t) \neq \emptyset, X_t(i) = k, \tilde{X}_t(j) = l\right]\\
   & + \sum\limits_{B \cap C = \emptyset} \mathbb{P} \left[\psi^i(t) = B, X_t (i) = k\right] \mathbb{P}\left[ \psi^j(t) = C, X_t (j) = l\right] .
  \end{split}
 \end{equation}

 Subtracting \eqref{together} and \eqref{apart} we get that for $j \neq i$,
 \begin{equation}\label{diff}
  \begin{split}
   & \big|\mathbb{P}[X_t (i) = k, X_t(j) = l] -\mathbb{P}[X_t (i) = k] \mathbb{P}[X_t(j) = l]\big| \\
   & \leq \mathbb{P}\left[\psi^i(t) \cap \psi^j (t) \neq \emptyset, X_t (i) = k, X_t (j) = l\right] + \mathbb{P}\left[\psi^i(t) \cap \tilde{\psi}^j(t) \neq \emptyset, X_t(i) = k, \tilde{X}_t(j) = l\right]\\
   & \leq  \mathbb{P}\left[\psi^i(t) \cap \psi^j (t) \neq \emptyset\right] + \mathbb{P}\left[\psi^i(t) \cap \tilde{\psi}^j(t) \neq \emptyset\right]\\
   & \leq \dfrac{2(3/2)^DN}{(N-D)} \left[ N\ln \left(\frac{N+u_N(t)-1}{N}\right) - \dfrac{(N-1)(u_N(t)-1)}{(N+u_N(t)-1)}\right],
  \end{split}
 \end{equation}
 where in the last inequality we used Lemma \ref{cota} to bound each summand (we have used that the result holds also for $\psi^i(t) \cap \tilde{\psi}^j(t)$). Using the last bound and summing over $i$ and $j$ different, we can deduce \eqref{covariance1}:

 \begin{equation*}
  \begin{split}
   \big|\mathbb{E}_{\xi} \left[m_k(t)m_l(t)\right] - \mathbb{E}_{\xi} [m_k(t)]\mathbb{E}_{\xi} [m_l( t)]\big| 
   & \leq {1 \over N^2} \sum\limits_{i,j} \big|\mathbb{P}[X_t(i)=k,X_t(j)=l] - \mathbb{P}[X_t(i)=k] \mathbb{P}[X_t(j)=l]\big| + \dfrac{1}{N} \\
   &  \leq \dfrac{2(3/2)^D(N-1)}{(N-D)} \left[ N\ln \left(\frac{N+u_N(t)-1}{N}\right) - \dfrac{(N-1)(u_N(t)-1)}{(N+u_N(t)-1)}\right]+ \dfrac{1}{N}.
  \end{split}
 \end{equation*}

\begin{flushright}
$\square$
\end{flushright}

We have an analogous  result for the (non-normalized) tail $\{ \pi^N_k (t) \}$, which will be useful in the next section.
\begin{proposition}\label{cotapi} For  any $\xi \in \Lambda_N$, $t\geq 0$ and $k,l \in \mathbb{Z}_+$, we have 
		\begin{equation}\label{covariancepi}
\big| \mathbb{E}_{\xi} \left[\pi^N_k(t)\pi^N_l(t)\right] - \mathbb{E}_{\xi} [\pi^N_k(t)] \mathbb{E}_{\xi}[\pi^N_l(t)] \big| \leq \dfrac{2(3/2)^DN^2(N-1)}{(N-D)} \left[ N\ln \left(\frac{N+u_N(t)-1}{N}\right) - \dfrac{(N-1)(u_N(t)-1)}{(N+u_N(t)-1)}\right] + N,
		\end{equation}
\end{proposition}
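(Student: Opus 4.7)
\textbf{Proof plan for Proposition \ref{cotapi}.}
The plan is to repeat the argument used for Proposition \ref{propagationofchaos}, but working with the unnormalized indicator events $\{X_t(i)\ge k\}$ in place of $\{X_t(i)=k\}$, and keeping track of the fact that $\pi_k^N(t)=\sum_{i=1}^N\mathbf{1}_{\{X_t(i)\ge k\}}$ carries no $1/N^2$ prefactor in the covariance. Writing
\[
\bigl|\mathbb{E}_\xi[\pi_k^N(t)\pi_l^N(t)]-\mathbb{E}_\xi[\pi_k^N(t)]\mathbb{E}_\xi[\pi_l^N(t)]\bigr|\le \sum_{i,j=1}^N\bigl|\mathbb{P}[X_t(i)\ge k,X_t(j)\ge l]-\mathbb{P}[X_t(i)\ge k]\mathbb{P}[X_t(j)\ge l]\bigr|,
\]
I would split the double sum into the diagonal part $i=j$ and the off-diagonal part $i\ne j$. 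The diagonal contributes at most $N$, since each summand is bounded by $1$. This gives the additive $N$ appearing in the right-hand side of \eqref{covariancepi}.

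For the off-diagonal part I would mimic exactly the decompositions \eqref{together}--\eqref{apart} from the proof of Proposition \ref{propagationofchaos}. The only fact that needs verification is the measurability statement: for a subset $B\subseteq\{1,\dots,N\}$, the event $\{X_t(i)\ge k,\,\psi^i(t)=B\}$ is $\sigma(\omega_B[-t,0))$-measurable. This is immediate from the construction of the clan of ancestors, because on $\{\psi^i(t)=B\}$ the trajectory of $X_t(i)$ depends only on arrivals touching labels in $B$, and replacing an equality $X_t(i)=k$ by the inequality $X_t(i)\ge k$ keeps the event in the same $\sigma$-algebra. Consequently, for disjoint $B,C$ the events $\{\psi^i(t)=B,X_t(i)\ge k\}$ and $\{\psi^j(t)=C,X_t(j)\ge l\}$ are independent, and the same cancellation as in \eqref{together}--\eqref{apart} yields, for $i\ne j$,
\[
\bigl|\mathbb{P}[X_t(i)\ge k,X_t(j)\ge l]-\mathbb{P}[X_t(i)\ge k]\mathbb{P}[X_t(j)\ge l]\bigr|\le \mathbb{P}[\psi^i(t)\cap\psi^j(t)\ne\emptyset]+\mathbb{P}[\psi^i(t)\cap\tilde\psi^j(t)\ne\emptyset],
\]
exactly as in \eqref{diff}. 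Applying Lemma \ref{cota} to each term gives a bound of $\dfrac{2(3/2)^DN}{N-D}\bigl[N\ln(\tfrac{N+u_N(t)-1}{N})-\tfrac{(N-1)(u_N(t)-1)}{N+u_N(t)-1}\bigr]$ for each ordered pair with $i\ne j$.

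Finally, I would sum this bound over the $N(N-1)$ off-diagonal pairs, which produces the first term on the right-hand side of \eqref{covariancepi}, and add the diagonal contribution of $N$. There is no genuine obstacle here; the single point that requires care is the measurability remark above, which is precisely what lets us transport the clan-of-ancestors argument from equality events to tail events. The entire computation is essentially a bookkeeping variant of the proof of Proposition \ref{propagationofchaos}, with the scaling $1/N^2$ replaced by $1$.
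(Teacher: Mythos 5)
Your proposal is correct and follows essentially the same route as the paper: split the covariance sum into diagonal and off-diagonal terms, bound the diagonal by $N$, and transport the clan-of-ancestors decomposition from Proposition \ref{propagationofchaos} to the tail events $\{X_t(i)\ge k\}$ before invoking Lemma \ref{cota} on each of the $N(N-1)$ off-diagonal pairs. Your explicit remark that $\{X_t(i)\ge k,\psi^i(t)=B\}$ is $\sigma(\omega_B[-t,0))$-measurable is exactly the point the paper leaves implicit when it says one can ``adapt the same argument.''
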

\begin{proof}
	By definition of $\pi^N_k(t)$, we have that
\begin{equation*}
\begin{split}
 \big| \mathbb{E}_{\xi} \left[\pi^N_k(t)\pi^N_l(t)\right] - \mathbb{E}_{\xi} [\pi^N_k(t)] \mathbb{E}_{\xi} [\pi^N_l(t)] \big| 
 \leq \sum\limits_{i =1}^N \sum\limits_{ j=1}^{N} \big|\mathbb{P}[X_t (i) \geq k, X_t(j) \geq l] -\mathbb{P}[X_t (i) \geq k] \mathbb{P}[X_t(j) \geq l]\big|\\
 \leq \sum\limits_{i =1}^N \sum\limits_{j=1, j\neq i}^{N} \big|\mathbb{P}[X_t (i) \geq k, X_t(j) \geq l] -\mathbb{P}[X_t (i) \geq k] \mathbb{P}[X_t(j) \geq l]\big| + N
\end{split}
\end{equation*}
	We can adapt the same argument as in the proof of Proposition \ref{propagationofchaos}, to conclude that for any pair of servers $i,j$, $i \neq j$ 
	\begin{equation*}
	 \big|\mathbb{P}[X_t (i) \geq k, X_t(j) \geq l] -\mathbb{P}[X_t (i) \geq k] \mathbb{P}[X_t(j) \geq l]\big| \leq
	 \dfrac{2(3/2)^DN}{(N-D)} \left[ N\ln \left(\frac{N+u_N(t)-1}{N}\right) - \dfrac{(N-1)(u_N(t)-1)}{(N+u_N(t)-1)}\right] 
	\end{equation*}
	and the proof is complete.
\end{proof}

\subsection{Proof of Corollary \ref{velocity}}
From \eqref{cota1}, we have that $\dfrac{Nu_N(t)}{N+u_N(t)-1}$ corresponds to the logistic function, so the natural upper bound for this function is the exponential $u_N(t)=\exp{\left( \dfrac{2^{D-1}\lambda DNt}{N-D}\right) }$. Using this bound in the inequality \eqref{integral}, we have that
	\begin{align*}
		\mathbb{P}(\psi^i(s) \cap \psi^j(s) \neq \emptyset)&\leq 
		 \dfrac{3^D\lambda N}{(N-D)^2}\int_0^t u^2_N(s)ds = \dfrac{3^D\lambda N}{(N-D)^2}\int_0^t \exp{\left( \dfrac{2^D\lambda DNs}{N-D}\right) }ds\\
		 & \leq \left(\dfrac{3}{2}\right)^D \dfrac{\exp{\left( \dfrac{2^D\lambda DNt}{N-D}\right) } -1}{(N-D)D} 
		 \sim \left(\dfrac{3}{2}\right)^D \dfrac{\left(e^{2^D\lambda Dt} -1\right)}{ND} 
	\end{align*} 
	from an $N_0$ onward, and from here
	\begin{align*}
	\big|\mathbb{E}_{\xi} \left[m_k(t)m_l(t)\right] - \mathbb{E}_{\xi} [m_k(t)]\mathbb{E}_{\xi} [m_l( t)]\big| \leq
	\left(\dfrac{3}{2}\right)^D \dfrac{2\left(e^{2^D\lambda Dt} -1\right)}{ND}  + \dfrac{1}{N} \leq \dfrac{1 +  (3/2)^D\left(e^{2^D\lambda Dt} -1\right)}{N} 
	\end{align*}
	
	\begin{flushright}
		$\square$
	\end{flushright}

\section{Convergence of the tagged queue}\label{sec:queue}

In this section we will study the asymptotic behavior a fixed queue. The fixed queue will be called the "tagged" queue, the terminology comes from particle systems. By the symmetry of the model we can fix the first server, so in the following
 the process $X^N(1)$ will be the tagged queue. We will prove that, as $N$ grows to infinity, the rate of arrival to this tagged queue converges a.s. to some constant (depending on time and state of the queue). This will be accomplished by constructing an appropriate coupling.

%, which will be successful under the following mild assumption on the local service policy.

%\begin{hyp}\label{hyH}
 %Let $X^A(t)$ and $X^B(t)$ two queue length processes associated to the same service scheduling but different arrival point processes $A(t)$, $B(t)$ such that $A(t)$ is stochastically dominated by $B(t)$. We say that a service policy satisfies Hypothesis \ref{hyH} if there exists a coupling $( \hat {X}^A, \hat{X}^B)$ of the processes $X^A$ and $X^B$ such that  $\hat {X}^A$ is stochastically dominated by $\hat {X}^B$.	
%\end{hyp}

%Hypothesis \ref{hyH} is satisfied by Processor sharing, FIFO, LIFO and Round robin policies, but it is not satisfied by Random order of service policy (see the Appendix for a proof). As a direct consequence we will conclude convergence in total variation of the law of the tagged queue to the law of some queue which is independent of the system. \\

First, for $x=(x_1,...,x_N) \in \mathbb{Z}_+^N$ and $k \in \mathbb N$ define the deterministic function $\lambda^N :  \mathbb Z _+ ^N  \times \mathbb Z_ + \rightarrow \mathbb [0, \infty)$ by
\begin{equation}\label{lambdaNdef}
\lambda^N (x,k):= 1_{ \{x(1)=k \} } \, \lambda N  \sum\limits_{i=1}^D \frac{1}{i}\frac{ { \pi_k^N(x)-\pi_{k+1}^N(x) -1    \choose i-1 }{\pi_{k+1}^N(x)\choose D-i }}{ {N  \choose D}  } , 
\end{equation}
(recall that $\pi^N_k(x)= \sum_{i=1}^{N} \I_{\{x_i \geq k\}}$ for each $k=0,1,2,...$). In the rest of this section, we will always assume that $\pi^N_k(x) -\pi^N_{k+1}(x)$,  $\pi^N_k(x) \geq D$ since we are only interested in the behavior of $ \lambda^N (x,k)$ as the size $N$ of the parallel system grows to infinite. \\

The interpretation of $\lambda^N (x,k)$ is the following:  Given a system of $N$ servers with queue lengths $x$, when the first server has length $k$ (that is $x(1)=k$) and there is a total rate $\lambda N$ entering the system, the effective arrival rate associated to the first queue is given by $\lambda^N (x,k)$. Each summand in the expression corresponds to the probability of selecting $i$ servers with queue length equal to $k$ and $D-i$ servers with queue length strictly bigger than $k$ and then choosing the server number $1$ between those $D$ servers to allocate an incoming task. \\

For natural numbers $D \leq a<b$ define the function
\begin{eqnarray}\label{eqsum}
	S^D(a,b) & := &(b-1)...(b-(D-1)) + a(b-2)...(b-(D-1))+ a(a-1)(b-3)...(b-(D-1)) \nonumber \\
	& + &  ... \, + a(a-1)...(a-(D-3))(b-(D-1)) + a(a-1)...(a-(D-2))   \nonumber \\
	& = &  \sum_{i=0}^{D-1} (a-0)(a-1)  \dots (a-(i-1)) (a-i)^0 (b-(i+1)) \dots (b-(D-1))
\end{eqnarray}

The next result allows us to easily handle the value of $\lambda^N (x,k)$. Its proof can be found in the Appendix \ref{apconv}.
\begin{lemma}\label{lambdasum}
$$  \lambda^N (x,k)= 1_{ \{x(1)=k \} } (\lambda N) \frac{(N-D)!}{N!} S^D(\pi^N_{k+1}(x), \pi^N_k(x)) .$$
\end{lemma}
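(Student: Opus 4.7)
The idea is to re-derive $\lambda^N(x,k)$ from its probabilistic meaning via a standard order-statistics trick, obtaining the right-hand side directly, and then read off the expression $S^D(\pi^N_{k+1}(x),\pi^N_k(x))$ term by term.

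The key observation is that choosing $D$ servers uniformly at random without replacement and then breaking ties uniformly among those with minimum queue length is probabilistically equivalent to sampling a uniformly random ordered sequence of $D$ distinct servers and assigning the task to the first server in that sequence achieving the minimum queue length among the $D$. Indeed, each unordered $D$-subset arises from exactly $D!$ orderings, and among any group of $i$ tied minimum-length servers inside a $D$-subset, each occupies the earliest position with equal probability $1/i$, which reproduces the $1/i$ weighting in the definition \eqref{lambdaNdef}. Therefore $\lambda^N(x,k)/(\lambda N)$ is precisely the probability, over a uniform ordered sequence of $D$ distinct servers drawn from $\{1,\dots,N\}$, that server $1$ appears in the sequence and is the first to realise the minimum queue length among the selected servers.

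Assume $x(1)=k$ and condition on the position $p\in\{1,\dots,D\}$ of server $1$ in the ordered sequence. The event under consideration requires that the $p-1$ servers occupying earlier positions all have queue length strictly greater than $k$ (else server $1$ would either not be minimal or would lose the tie-break), and that the $D-p$ servers occupying later positions all have queue length at least $k$ (else server $1$ would not be minimal). Counting ordered selections, there are $\pi^N_{k+1}(x)(\pi^N_{k+1}(x)-1)\cdots(\pi^N_{k+1}(x)-p+2)$ choices for the prefix and, once server $1$ and these $p-1$ servers are removed from the pool of servers of length $\geq k$, exactly $(\pi^N_k(x)-p)(\pi^N_k(x)-p-1)\cdots(\pi^N_k(x)-D+1)$ choices for the suffix. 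Reindexing via $i=p-1$ identifies the resulting sum with $S^D(\pi^N_{k+1}(x),\pi^N_k(x))$ as defined in \eqref{eqsum}. Dividing by the total number $N!/(N-D)!$ of ordered sequences and multiplying by the total arrival rate $\lambda N$ yields the claimed identity.

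The only mildly delicate point is lining up the two falling factorials with the definition of $S^D(a,b)$: the prefix contributes the block $a(a-1)\cdots(a-i+1)$ with $a=\pi^N_{k+1}(x)$, while the suffix contributes $(b-i-1)\cdots(b-D+1)$ with $b=\pi^N_k(x)$, both indexed by $i=p-1$. A purely algebraic proof, expanding the binomial coefficients in \eqref{lambdaNdef} and grouping factorials, is also available, but the combinatorial reformulation renders the identity essentially tautological and explains the appearance of the somewhat asymmetric polynomial $S^D$.
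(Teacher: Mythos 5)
Your proof is correct and takes a genuinely different route from the paper's. The paper proceeds algebraically: using the identity $\tfrac{1}{i}\binom{b-a-1}{i-1}=\tfrac{1}{b-a}\binom{b-a}{i}$ together with the Vandermonde convolution, it first compresses the defining sum into the closed form $\frac{\lambda N}{\binom{N}{D}}\cdot\frac{\binom{\pi^N_k}{D}-\binom{\pi^N_{k+1}}{D}}{\pi^N_k-\pi^N_{k+1}}$, and then applies the telescoping polynomial identity $b(b-1)\cdots(b-D+1)-a(a-1)\cdots(a-D+1)=(b-a)\,S^D(a,b)$ to unpack that ratio into $S^D$. You instead go straight from the probabilistic definition, replacing the unordered $D$-subset with uniform tie-breaking by a uniformly random ordered $D$-sequence in which server $1$ wins if and only if it is the first to realize the minimum queue length, and then conditioning on the position $p$ of server $1$; the $p$-th term of $S^D$ appears directly as the count of admissible prefix-suffix pairs, so the formula emerges term by term without any binomial gymnastics. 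Both arguments are valid; yours is the more transparent one, since it explains combinatorially why the asymmetric polynomial $S^D$ arises (each summand indexes a position of server $1$ in the ordering), whereas the paper's algebraic route yields as a byproduct the compact ratio-of-binomials form of $\lambda^N(x,k)$, which is not visible from the combinatorial derivation.
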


A direct corollary of Lemma \ref{lambdasum} is that  $\lambda^N (x,k)$ is uniformly bounded on $N,x$ and $k$:
\begin{eqnarray}\label{lambdaNbound}
 \lambda^{N}(x,k) &=&    1_{ \{x(1)=k \} } (\lambda N) \frac{(N-D)!}{N!} S^D(\pi^N_{k+1}(x), \pi^N_k(x)) \nonumber \\
& \leq &   \lambda    \Big( \frac{N}{N-D+1}\Big) \Big( \frac{N}{N-D+2}\Big) \cdots
\Big( \frac{N}{N-1}\Big) \nonumber  \\
& \cdot &  \sum_{i=0}^{D-1}  \Big( \frac{ \pi_{k+1}^N(x)}{N} \Big)  \Big( \frac{ \pi_{k+1}^N(x) -1}{N} \Big) \cdots
\Big( \frac{ \pi_{k+1}^N(x)- (i-1)}{N} \Big)  \nonumber  \\
& \cdot &   \Big( \frac{ \pi_k^N(x)- (i+1)}{N} \Big)  \Big( \frac{ \pi_k^N(x)- (i+2)}{N} \Big) \cdots  \Big( \frac{ \pi_k^N(x)- (D-1)}{N} \Big) \nonumber  \\
&\leq &  \lambda \Big( \frac{D}{D-D+1}\Big) \Big( \frac{D}{D-D+2}\Big) \cdots
\Big( \frac{D}{D-1}\Big)  \sum_{i=0}^{D-1}  1 ^i   1^{D-1-i} = C_D \lambda  ,
\end{eqnarray}
where $C_D$ is a constant that only depends on $D$.

Using the function $\lambda^N$, we can express the state dependent arrival rate for the tagged queue $X^N(1)$ on the event $\{X^N_t(1)=k\}$ by 
$$ \lambda_t^{k,N} := \lambda^N (X_t^N, k) .$$

A key result, concerning the convergence of the arrival rates for the tagged server, is the following:
\begin{theorem}\label{convergence}
 For any fixed time $t \in [0,T]$, the sequence of random variables $\{ \lambda_t^{k,N}  \}_{N \geq 1}$ converges  in $L^2$ to a constant $\lambda_t ^{k} $. Moreover, there exists a coupling between the random variables $\{ \lambda_t^{k,N}  \}_{N \geq 1}$ such that we have almost surely convergence.
\end{theorem}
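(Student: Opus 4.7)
\textbf{Proof plan for Theorem \ref{convergence}.}

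By Lemma \ref{lambdasum}, for each fixed $k$ one can rewrite
\[
\lambda_t^{k,N}=\I_{\{X^N_t(1)=k\}}\,\lambda\,\Phi^N_D\!\left(\tfrac{\pi^N_k(t)}{N},\tfrac{\pi^N_{k+1}(t)}{N}\right),
\]
where $\Phi^N_D$ is a two-variable polynomial which, by the estimate \eqref{lambdaNbound}, is uniformly bounded on $[0,1]^2$ by $C_D$ and converges uniformly on $[0,1]^2$ to a fixed polynomial $\Phi_D$ as $N\to\infty$. Since the indicator depends only on server $1$ while $\Phi^N_D$ depends on empirical tails running over all $N$ queues, the crux of the proof is to show that $\pi^N_j(t)/N$ converges in $L^2$ to a deterministic value $p_j(t)$ for $j\in\{k,k+1\}$.

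The variance control is immediate from Proposition \ref{cotapi}: applied with $l=k$ (resp.\ $l=k+1$) it gives $\V(\pi^N_k(t)/N)=\mathcal{O}(1/N)$, so each normalized tail is concentrated around its own expectation. Since $\E[\pi^N_k(t)/N]\in[0,1]$, Bolzano-Weierstrass provides subsequential limits; to pin these down to a single accumulation point $p_k(t)$ I would invoke a martingale / Kurtz-type argument for $\pi^N_k(t)/N$, using that its compensator is a uniformly bounded function of the empirical tails (via Lemma \ref{lambdasum} on the arrival side, together with the work-conserving service rates) and uniqueness of the limiting fluid ODE with the given initial data. Combining this with the uniform continuity of $\Phi_D$ on $[0,1]^2$ yields $L^2$ convergence of $\lambda_t^{k,N}$ to $\I_{\{X^N_t(1)=k\}}\,\lambda\,\Phi_D(p_k(t),p_{k+1}(t))$, so that on the event $\{X^N_t(1)=k\}$ the effective arrival rate converges to the deterministic value $\lambda^k_t:=\lambda\,\Phi_D(p_k(t),p_{k+1}(t))$.

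For the almost-sure coupling, I would realize all the processes $X^N$, $N\ge 1$, on a single probability space via a common graphical representation: a master rate-$\lambda$ marked Poisson process together with an independent i.i.d.\ reservoir of service times attached to incoming jobs and an i.i.d.\ family of uniform marks providing the random $D$-subset of candidate servers within any system of current size $N$. On this coupled space, the variance bound $\V(\lambda^{k,N}_t)\le C/N$ and Chebyshev give $\p(|\lambda^{k,N}_t-\lambda^k_t|>\epsilon)\le C/(N\epsilon^2)$, so Borel-Cantelli along the subsequence $N_j=j^2$ yields almost sure convergence along $\{N_j\}$; interpolation to the full sequence follows from the fact that the clan $\psi^1(t)$ of server $1$ has a stochastic size essentially independent of $N$ once $N$ is large enough, which makes the dependence of $\lambda_t^{k,N}$ on the coupled data stable in $N$.

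The main technical obstacle is the mean-convergence step: Proposition \ref{cotapi} by itself does not identify the limit, and the Cauchy convergence of $\E[\pi^N_k(t)/N]$ requires either the full fluid-limit framework for the empirical measure or a direct coupling-based Cauchy estimate comparing two systems of different sizes on the graphical space. The rest of the argument -- the polynomial reduction, the $L^2$ bound and the explicit a.s.\ coupling -- is then routine given the propagation-of-chaos estimates already established in Section \ref{sec:chaos}.
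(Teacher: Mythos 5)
Your plan reuses Proposition \ref{cotapi} for variance control, which is the right ingredient, but it misses the mechanism the paper actually uses to close the two gaps that you yourself flag.

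The paper does not prove $L^2$ convergence first and then upgrade to a.s.\ convergence; it does the opposite, and this ordering is what makes the proof work without ever identifying the limit. The key idea you are missing is the explicit coupling of $X^N$ and $X^{N+1}$ via the three ``colored'' Poisson streams (yellow/red/blue), which leads to the algebraic comparison \eqref{lambdaN+1} and culminates in Lemma \ref{monotone}: for $N\ge N_0$ (with $N_0=3D-4$, independent of $x$ and $k$), the sequence $\lambda^N(x,k)$ is \emph{non-decreasing} in $N$. Together with the uniform bound $\lambda^N(x,k)\le C_D\lambda$ from \eqref{lambdaNbound}, a bounded monotone sequence converges, giving a.s.\ convergence on the coupled space with no Borel--Cantelli, no subsequence, and no interpolation step. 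Once a.s.\ convergence to a bounded limit $\lambda_t^k$ is in hand, constancy of the limit follows directly from $\V(\lambda_t^{k,N})\to 0$ (Proposition \ref{cotapi}) by dominated convergence: $\V(\lambda_t^{k,N})\to\V(\lambda_t^k)=0$. One never needs $\E[\pi_k^N(t)/N]$ to be Cauchy, nor a fluid ODE, nor uniqueness of the hydrodynamic limit.

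This makes both of the weak points in your plan genuine gaps rather than routine details. First, your ``interpolation to the full sequence'' step after Borel--Cantelli along $N_j=j^2$ is not a proof: ``the clan $\psi^1(t)$ has size essentially independent of $N$'' does not control the fluctuation of $\lambda_t^{k,N}$ between consecutive $N_j$, and with only $\V=\mathcal O(1/N)$ there is no a priori reason the full sequence converges a.s.\ without some structural input like monotonicity. Second, your $L^2$ step, as you correctly note, needs convergence of the means; invoking ``a martingale/Kurtz-type argument'' would require developing the entire fluid-limit machinery for the empirical tails, which is a much heavier tool than the paper uses and which would also need its own tightness and uniqueness arguments. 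The decisive shortcut is the coupling-plus-monotonicity lemma; without it the structure of your argument does not close.

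A smaller point: the factor $\I_{\{X_t^N(1)=k\}}$ is already built into $\lambda_t^{k,N}$ by definition \eqref{lambdaNdef}, so writing the limit as $\I_{\{X_t^N(1)=k\}}\lambda\Phi_D(p_k(t),p_{k+1}(t))$ is not quite right: the limit $\lambda_t^k$ is a deterministic number, and the indicator is handled inside the coupling (initial condition and service times of server $1$ are identical across $N$), not left hanging in the limiting expression.
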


\begin{proof}

First, we prove that $\left\{ \lambda_t^{k,N}  \right\}_{N \geq 1}$ converge almost surely to a random variable $\lambda_t^{k} $ on $t \in [0,T]$.  \\

To prove the a.s. convergence of the rates,  we will construct a coupling with the purpose on having an exact algebraic relationship between the rates of arrival to the systems of servers $X^N(1)$ and $X^{N+1}(1)$, given in equation \eqref{lambdaN+1} . \\

Define three independent Poisson process: 
\begin{itemize}
	\item The \textcolor{yellow}{yellow process} with rate $\lambda  N -  (D-1) \lambda$. When there is an arrival correspondent to this process, $D$ servers from the first $N$ are chosen uniformly at random, and the task joins the shortest queue from those servers.
	\item The \textcolor{red}{red process} with rate $(D-1) \lambda$. When there is an arrival correspondent to this process, its task is allocated in a similar manner as in the yellow process.
	\item The  \textcolor{blue}{blue process} with rate $\lambda D$. For this process, the server $N+1$ is always chosen together with $D-1$ uniformly random chosen servers from the first $N$, and the task is allocated in the server with the shortest queue among these.
\end{itemize}
Using these color processes, we couple the systems $X^N$ and $X^{N+1}$: we use the sum of the \textcolor{yellow}{yellow} and \textcolor{red}{red} arrivals to be the arrival process of the system $X^N$, and the sum of the \textcolor{yellow}{yellow} and the \textcolor{blue}{blue} one to be the arrival process of $X^{N+1}$.  Note that the original rate of the arrival process of system $X^{N+1}$ can be recovered as the sum of the rates of the \textcolor{yellow}{yellow} and the \textcolor{blue}{blue} processes:
$$ \lambda (N+1) =  \lambda (N+1)  \frac{  \binom{N}{D}  }{  \binom{N+1}{D}  }  
+ \lambda (N+1) \frac{  \binom{N}{D-1}  }{  \binom{N+1}{D}  } 
=  (\lambda  N -  (D-1) \lambda) + \lambda D  ,$$ 
where the first term in the r.h.s  corresponds to the rate of those arrivals where the $D$ servers are chosen from the first $N$ queues, and the second one to the rate of arrivals when the $N+1$ server is chosen together with $D-1$ other servers. Similarly, the sum of the rates for the \textcolor{yellow}{yellow} and \textcolor{red}{red} processes gives the total rate for the $X^{N}$ system. For all the queueing systems, the initial configurations will be the same in the first $N$ servers, as well as the service times (whenever the service time is associated to a common arrival in both systems).\\

First, we compare directly the rates $\lambda^N(x,k)$ and $\lambda^{N+1}(x,k)$ for a fixed state $x=(x_i)_{i=1}^{\infty} \in (\mathbb{ Z}_{+})^{\mathbb N}$ of the system such that $x(1)=k$.  The main idea is to use the coupling defined above to note that the difference between $\lambda^N(x,k)$  and $\lambda^{N+1}(x,k)$ is given by the difference of the effective arrival rates to the server $1$ corresponding to the \textcolor{red}{red} and the \textcolor{blue}{blue} processes. \\

We can write explicitly the rate $\lambda^{N+1}(x,k)$ in terms of $\pi ^N (x)$ and $x(N+1)$:

\begin{eqnarray}\label{lambdaN+1}
\lambda^{N+1}(x,k) &=& \I_{ \{ x(1)=k  \} } (\lambda N- (D-1) \lambda ) \frac{(N-D)!}{N!} S^D(\pi^N_{k+1}(x), \pi^N_k(x) )   \nonumber \\
 & + & \lambda \I_{ \{ x(1)=k, x(N+1) > k \} }  (\lambda D )  \sum \limits_{i=1}^D \frac{1}{i} \frac{ { \pi_k^N(x)-\pi_{k+1}^N(x) -1     \choose i-1 }{\pi_{k+1}^N(x) \choose D-1-i }}{ {N  \choose D-1}  }  \nonumber \\
 & + & \lambda \I_{ \{  x(1)=k, x(N+1) = k  \} } (\lambda D)  \sum \limits_{i=2}^D \frac{1}{i} \frac{ { \pi_k^N(x)-\pi_{k+1}^N(x) -1     \choose i-2 }{\pi_{k+1}^N(x) \choose D-i } }{ {N  \choose D-2}  } 
\end{eqnarray}

Note that the first summand corresponds to the effective arrival rate to server $1$ associated to the \textcolor{yellow}{yellow} process. The second and the third summands corresponds to the effective arrival rate of the \textcolor{red}{red} process, depending on the queue length of server $N+1$. In the second summand, the expression of the sum corresponds to the probability of choosing $D$ servers in the following manner: choose server $1$ and $N+1$, choose $i-1$ servers with queue length equal to $k$ and choose $(D-1)-i$ servers with queue length strictly bigger than $k$. And then, it is multiplied for the probability of choosing server $1$ between the $i$ servers with length equal to $k$. Analogously, for the third summand, the expression represents the probability of choosing server $1$, $N+1$, other $i-2$ servers with length equal to $k$ and $D-i$ servers with length strictly bigger than $k$. Note that the case $x(N+1) < k$ does not contribute to the rate since an incoming task from the \textcolor{red}{red} process would be always allocated in server $x(N+1)$ instead of $x(1)$ in that situation.\\

Now, that we have a handy way to do a comparison between $\lambda^{N}(x,k)$  and $\lambda^{N+1}(x,k)$, given by \eqref{lambdasum},  \eqref{lambdaN+1} and Lemma \ref{lambdasum}, we are able to do long but elementary computations to prove the next result. The proof is in the Appendix \ref{apconv}.
\begin{lemma}\label{monotone}
There exists a natural number $N_0$ (not depending on $x$ or $k$) such that the sequence $ \{ \lambda^{N}(x,k) \}_{N=N_0}^{\infty} $ is non-decreasing.
\end{lemma} 

Since the sequence  $\{ \lambda^{N}(x,k) \}_{N=1}^{\infty}$ is bounded, we conclude the almost surely convergence for a given state $x$ such that $x(1)=k$. Hence, we have that $\lambda_t^{k,N}$ converges pointwise a.s. in $[0,T]$ to a limit process $\lambda_t^{k}$. \\

It only remains to prove the convergence in the $L^2$ sense and that $\lambda_t^{k}$ is a constant. To demonstrate both assertions it is enough to show that $\left\{\V\left(\lambda_t^{k,N}\right)\right\}_{N \geq 1}$ goes to zero for every $k \in \Z_+$.

In the rest of the proof we restrict ourselves to the case $D=2$ (we present the details for general $D$ in the Appendix \ref{apconv}). By definition, we can compute the variance of the state dependent arrival rate for the tagged queue $X^N(1)$ on the event $\{X^N_t(1)=k\}$ 
\begin{align*}
\V (\lambda_t^{k,N}) & = \V 
\left( \frac{\lambda}{N-1}(\pi^N_k(t) + \pi^N_{k+1}(t) -1) \right) \\
& = \frac{\lambda^2}{(N-1)^2}\left[\V(\pi^N_k) + \V(\pi^N_{k+1}) +2\text{Cov}(\pi^N_k,\pi^N_{k+1})  \right],
\end{align*}
and by Proposition \ref{cotapi}, it follows that
\begin{align*}
\V (\lambda_t^{k,N}) & \leq  \frac{4\lambda^2}{(N-1)^2}\left\{ \dfrac{2(3/2)^DN^2(N-1)}{(N-D)} \left[ N\ln \left(\frac{N+u_N(t)-1}{N}\right) - \dfrac{(N-1)(u_N(t)-1)}{(N+u_N(t)-1)}\right]+ N \right\}\\
& \leq \dfrac{8(3/2)^D\lambda^2N^2}{(N-D)(N-1)} \left[ N\ln \left(\frac{N+u_N(t)-1}{N}\right) - \dfrac{(N-1)(u_N(t)-1)}{(N+u_N(t)-1)}\right] + \dfrac{4\lambda^2N}{(N-1)^2}  ,
\end{align*}
which goes to zero when $N$ goes to infinity. 
\end{proof}

\begin{theorem}\label{theo:conv}
	Let $Q_t$ be a single queue length process with the same distribution service and initial distribution as the tagged queue $X_t^N(1)$ but with (time and state) dependent arrival rate $\lambda_t^k$ (defined in Theorem \ref{convergence}). Then, for any fixed $t \in [0,T]$,
	$$ \lim_{N \rightarrow \infty } || \mathbb P _{ X^N_t(1) }  -  \mathbb P_{Q_t } ||_{TV}  =0 ,$$  
	where $|| \cdot ||_{TV}$ denotes the total variance distance between probability measures. 
\end{theorem}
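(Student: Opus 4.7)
The plan is to build a Markovian coupling of $X^N(1)$ and $Q$ that keeps the two processes equal until the first discrepancy in their arrival streams, and then to control the probability of such a discrepancy using Theorem~\ref{convergence}. Start both processes from the common value $X^N_0(1)=Q_0$. Drive candidate arrivals by a single Poisson process $\Pi$ of rate $C_D\lambda$ (the uniform bound from \eqref{lambdaNbound}) equipped with i.i.d.\ uniform marks $U_i\sim U[0,1]$. At each epoch $\tau_i$ of $\Pi$, accept the arrival for $X^N(1)$ iff $U_i\leq \lambda^N(X^N_{\tau_i^-},X^N_{\tau_i^-}(1))/(C_D\lambda)$ and for $Q$ iff $U_i\leq \lambda^{Q_{\tau_i^-}}_{\tau_i^-}/(C_D\lambda)$. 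Couple the service times so that the $n$-th job accepted by server~$1$ in either system carries the same service requirement. Because the local discipline is work-conserving and the two queues have received identical arrivals so far, they remain equal as long as the acceptance decisions coincide.

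Let $T$ denote the first epoch of $\Pi$ at which the acceptance decisions disagree. On $\{T>t\}$ one has $X^N_t(1)=Q_t$, so the standard coupling inequality gives
\begin{equation*}
\|\mathbb{P}_{X^N_t(1)}-\mathbb{P}_{Q_t}\|_{TV}\leq \mathbb{P}(T\leq t).
\end{equation*}
Conditional on $\{T>s\}$, both processes occupy the common state $k=X^N_s(1)$, and the probability of a disagreement in an infinitesimal interval $(s,s+ds)$ equals $|\lambda^N(X^N_s,k)-\lambda^k_s|\,ds$. Using $1-e^{-x}\leq x$ one obtains
\begin{equation*}
\mathbb{P}(T\leq t)\leq \int_0^t \sum_{k\geq 0}\mathbb{E}\!\left[\mathbf{1}_{\{X^N_s(1)=k\}}\,\big|\lambda^N(X^N_s,k)-\lambda^k_s\big|\right]ds.
\end{equation*}

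It then remains to show this right-hand side vanishes as $N\to\infty$. For each fixed $s$ and $k$, Theorem~\ref{convergence} yields $\lambda^N(X^N_s,k)\to\lambda^k_s$ in $L^2$, hence in $L^1$, so the $k$-th summand tends to zero. To control the tail of the sum, \eqref{lambdaNbound} bounds each summand by $2C_D\lambda\,\mathbb{P}(X^N_s(1)=k)$, which reduces matters to uniform tightness of the family $\{X^N_s(1):N\geq 1,\,s\in[0,t]\}$; this is where I expect the only real technical point to lie. Tightness follows, however, from a stochastic-domination observation: the arrival stream to server~$1$ is a thinning of $\Pi$, so under the coupling $X^N_s(1)\leq X^N_0(1)+\mathrm{Poisson}(C_D\lambda t)$ in distribution, uniformly in $N$ and $s\in[0,t]$, since departures can only decrease the queue. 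Truncating the series at a level $K$, sending $N\to\infty$ and then $K\to\infty$, combined with a dominated-convergence argument in $s$ (the integrand being bounded by $2C_D\lambda$), closes the proof.
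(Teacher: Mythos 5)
Your argument is correct, and it takes a genuinely different route from the paper's. The paper uses the same basic coupling idea (shared service times, arrival streams driven by the respective rates) and reduces the problem to showing $\mathbb{E}\big[1-e^{-\int_0^t |\lambda_s^{Q_s,N}-\lambda_s^{Q_s}|\,ds}\big]\to 0$, but it then invokes the almost-sure \emph{monotone} convergence of $\lambda_s^{k,N}$ (Lemma~\ref{monotone}), applies Egoroff's theorem on $[0,T]$ for each realization $\omega$ to obtain almost-uniform convergence, and finishes with a decomposition over the Egoroff exceptional set together with continuity of probability along the decreasing sequence of events $A_N$. Your proof never touches Lemma~\ref{monotone} nor Egoroff: you use a marked-Poisson (thinning) realization of the coupling, bound the total variation by the probability of a first disagreement, write that probability as an integral of the $L^1$ distance between the two rates, and then close the argument with the $L^2$ (hence $L^1$) convergence from Theorem~\ref{convergence} for each fixed $(s,k)$, a uniform-in-$N$ Poisson domination $X_s^N(1)\preceq X_0^N(1)+\mathrm{Poisson}(C_D\lambda t)$ to control the tail in $k$, and dominated convergence in $s$ (the integrand being globally bounded by $2C_D\lambda$ from \eqref{lambdaNbound}). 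The net effect is that your argument relies on more elementary analytic machinery (Fubini, tightness, dominated convergence) and in particular does not need the monotonicity of the rates, which is one of the more delicate technical points of the paper; the paper's route, by contrast, exploits that monotonicity to get almost-sure convergence and then patches up the lack of uniformity with Egoroff. Both proofs are sound; yours is arguably cleaner for this particular theorem, though it inherits from the paper the same mild ambiguity about whether $\lambda_s^{k,N}$ in Theorem~\ref{convergence} carries the indicator $\mathbf{1}_{\{X_s^N(1)=k\}}$ (your integrand restricts to $\{X_s^N(1)=k\}$ in any case, so the argument goes through under the natural reading).
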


\begin{proof}
	
We consider the basic coupling between the queue length processes $X_s^N(1)$ and $Q_s$, with the same initial value, service times and the arrival processes with corresponding rates $\lambda_s^{k, N}$ and $\lambda_s^{k}$, for $s \in [0,T]$.\\

The idea of the proof is that if we had uniform convergence for the sequence of functions $ \{ \lambda_s^{Q_s, N} (\omega) \}_{N \geq 0}$ for almost every $\omega$, the total variation convergence will follow easily. Theorem \ref{convergence} does not assure uniform convergence but punctual convergence for almost every $\omega$. However, the monotone convergence in Theorem \ref{convergence}  and a simple application of Egoroff's theorem  \cite[Theorem 2.5.5]{Ash} to the measure space $[0,T]$ equipped with the Lebesgue measure (for a given realization $\omega$), allow us to use similar ideas. \\

 We know that for (almost) every $\omega$, the sequence of functions $ \{ \lambda_s^{Q_s, N} (\omega) \}_{N \geq 0}$ converges to some function $\lambda_s^{Q_s}(\omega)$ defined on $[0,T]$, by Theorem \ref{convergence}.  This implies that, for $\omega$ fixed, we have almost uniform convergence in $[0,T]$ by Egoroff's theorem. Namely, given $\varepsilon >0$ there exist a set $E(\omega) \subseteq [0,T]$ with Lebesgue measure smaller than $\varepsilon$ such that the convergence of $\lambda_s^{Q_s, N} (\omega)$ to $\lambda_s^{Q_s}(\omega)$ is uniform on $[0,T] \setminus E (\omega)$. 
 
 Define $E$ as the random subset of $[0,T]$ such that in the realization $\omega$ takes the value $E(\omega)$.
%Fix $\varepsilon >0$. For (almost) every $\omega$ fixed, the sequence of functions $ \{ \lambda_s^{Q_s, N} (\omega) \}_{N \geq 0}$ converges to some function $\lambda_s^{Q_s}$ defined on $[0,T]$, by Theorem \ref{convergence}.  This implies that, for $\omega$ fixed, we can apply Egoroff's theorem \cite[Theorem 2.5.5]{Ash} to find a set $E(\omega) \subseteq [0,T]$ with Lebesgue measure smaller than $\varepsilon$ such that the convergence of $\lambda_s^{Q_s, N} (\omega)$ to $\lambda_s^{Q_s}$ is uniform on $[0,T] \setminus E (\omega)$. Define $E$ as the random subset of $[0,T]$ such that in the realization $\omega$ takes the value $E(\omega)$.
Then, for any $t \in [0,T]$,
	\begin{eqnarray*}
		|| \mathbb P _{ X^N_t(1) }  -  \mathbb P_{Q_t } ||_{TV}  
		&\leq&  2 \mathbb P ( X^N_t(1) \neq Q_t  )  \\
		&=& 2 \mathbb E(  \mathbb E( 1_{ X_t^N(1)  \neq Q_t } \, | \, \{  \lambda_s^{X_s^N(1), N}, \, \lambda_s^{Q_s}  \, :  s \in [0,t]  \}  ) )\\
			& \leq & 2 \mathbb E ( \mathbb E( 1-   e^{ - \int_{0}^{t}  |  \lambda_s^{Q_s, N} - \lambda_s^{Q_s}  | \, ds  }  | \, \{  \lambda_s^{X_s^N(1), N}, \, \lambda_s^{Q_s}  \, :  s \in [0,t]  \}  ) ) \\
				& = & 2 \mathbb E ( 1-   e^{ - \int_{0}^{t}  |  \lambda_s^{Q_s, N} - \lambda_s^{Q_s}  | \, ds  }   )   \\
				& = & 2 \mathbb E ( 1-   e^{ - 	( \int_{E} |  \lambda_s^{Q_s, N} - \lambda_s^{Q_s}  | \, ds + \int_{[0,t] \setminus E} |  \lambda_s^{Q_s, N} - \lambda_s^{Q_s}  | \, ds ) 	 }   )  \\
		& \leq &     2 \mathbb E (1-   e^{ -( C_D \lambda  \varepsilon +   t \, \sup_{ s \in [0,t] \setminus E} |  \lambda_s^{Q_s, N} - \lambda_s^{Q_s}  | ) }  ) \\
		& \leq & 2 (1-e^{-(  C_D \lambda  \varepsilon + t \varepsilon)} ) \mathbb P \Big( \sup_{ s \in [0,t] \setminus E} | \lambda_s^{Q_s, N} - \lambda_s^{Q_s}    |  < \varepsilon \Big)  \\
		&+&  2 (1- e^{-( C_D \lambda  \varepsilon + t C_D \lambda) }  ) \mathbb P \Big( \sup_{  s \in [0,t] \setminus E } | \lambda_s^{Q_s, N} - \lambda_s^{Q_s}    |  \geq \varepsilon \Big) .
	\end{eqnarray*}  
The second inequality is due to the fact that if all the arrivals coincide for both systems in $[0,t]$ then it must happen that $X^N_t(1) = X^\infty_t$. We used the global bound \eqref{lambdaNbound} for $\lambda_s^{Q_s, N}$ in the last inequalities.

Define 
$$ A_N= \Big\{   \sup_{   s \in [0,t] \setminus E } | \lambda_s^{Q_s, N} - \lambda_s^{Q_s}    |  \geq \varepsilon   \Big\} .$$
By Lemma \ref{monotone}, the functions $\{ \lambda_s^{Q_s, N} \}_{N \geq N_0}$ are non-decreasing, then the sequence of sets $\{ A_N \}_{N \geq N_0}$ is non-increasing. So we can use continuity of the probability and obtain
$$ \lim_{N \rightarrow \infty}  \mathbb P ( \sup_{ s \in [0,t] \setminus E} | \lambda_s^{X_s^N(1), N} - \lambda_s^{Q_s}    |  \geq \varepsilon ) =    \mathbb P \Big(  \bigcap_{N=N_0}^{\infty}  \Big\{ \sup_{  s \in [0,t] \setminus E } | \lambda_s^{X_s^N(1), N} - \lambda_s^{Q_s}    |  \geq \varepsilon \Big\} \Big)  .$$
The uniform convergence of $\{ \lambda_s^{Q_s, N} \}_{N \geq N_0}$ on $E$ implies that last expression is equal to zero, and the result follows.
\end{proof}

We can also state the following properties on the asymptotic
state dependent arrival rate which turns out to be useful in the next Section.

\begin{corollary}\label{limsuplambda} For every fix $t >0$, 
\[	\limsup\limits_{k} \lambda_t^{k} = 0\]
Moreover, if the tagged queue is positive recurrent, then
 \[	\limsup\limits_{k} \lim \limits_{N \to \infty}\lambda_\infty^{k,N} = 0\]

\end{corollary}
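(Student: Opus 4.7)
The plan is to combine the pointwise bound \eqref{lambdaNbound}, which by inspection actually holds in the sharper form $\lambda^N(x,k) \leq C_D \lambda \, \mathbf{1}_{\{x(1)=k\}}$ (the indicator is present in the defining formula and is preserved throughout the chain of inequalities leading to \eqref{lambdaNbound}), with a Markov-type tail estimate on the marginal distribution of the tagged queue. The indicator is the crucial feature: taking expectations reduces everything to controlling $\mathbb{P}(X_t^N(1) \geq k)$ uniformly in $N$.

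For the first statement, I would invoke Theorem \ref{convergence} to identify $\lambda_t^k = \lim_{N \to \infty} \mathbb{E}[\lambda_t^{k,N}]$ ($L^2$-convergence to a deterministic limit implies convergence of means), whence
\[
\lambda_t^k \;\leq\; C_D \lambda \, \limsup_{N \to \infty} \mathbb{P}\bigl(X_t^N(1) \geq k\bigr).
\]
To bound the right-hand side, observe that the arrival process into server $1$ has rate at most $C_D \lambda$ by the pointwise bound itself, so $\mathbb{E}[X_t^N(1)] \leq \mathbb{E}[X_0^N(1)] + C_D \lambda t =: C(t)$ uniformly in $N$ under the standing assumption of a bounded average initial load. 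Markov's inequality then gives $\mathbb{P}(X_t^N(1) \geq k) \leq C(t)/k$, and letting $k \to \infty$ delivers $\limsup_k \lambda_t^k = 0$.

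For the stationary statement the argument is structurally identical, with $C(t)$ replaced by the limiting mean stationary queue length. Positive recurrence of the cavity process $Q$ forces $\mathbb{E}[Q_\infty] < \infty$, so $\mathbb{P}(Q_\infty \geq k) \to 0$ as $k \to \infty$. The remaining task is to pass the tail estimate through the limit $N \to \infty$, i.e.\ to show $\mathbb{P}(X^N_\infty(1) \geq k) \to \mathbb{P}(Q_\infty \geq k)$ under the stationary law. This is where the main obstacle lies: the exchange of limits requires tightness of the $N$-server stationary distributions together with a stationary-regime convergence of the first-server marginal to the cavity process, which are precisely the ingredients developed in Section \ref{sec:stat}. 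Once those ingredients are in place, the corollary follows from exactly the same Markov + pointwise-bound combination as above.
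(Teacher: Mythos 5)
Your argument has a genuine gap in its central step. The ``sharpened'' bound $\lambda^N(x,k)\leq C_D\lambda\,\mathbf{1}_{\{x(1)=k\}}$ is correct as a statement about the function defined in \eqref{lambdaNdef}, but it is \emph{not} the bound that applies to the random variable $\lambda_t^{k,N}$ that figures in Theorem~\ref{convergence} and in this corollary. The paper drops the indicator in every computation involving $\lambda_t^{k,N}$ --- for example in the variance calculation
$\V(\lambda_t^{k,N})=\V\bigl(\tfrac{\lambda}{N-1}(\pi_k^N(t)+\pi_{k+1}^N(t)-1)\bigr)$ inside the proof of Theorem~\ref{convergence}, and in the displayed estimate at the start of the corollary's own proof. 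This is essential: if the indicator were retained, $\lambda_t^{k,N}$ would be a product of a concentrating factor with $\mathbf{1}_{\{X_t^N(1)=k\}}$, and it could not converge in $L^2$ to a nonzero \emph{constant}. So the inequality $\lambda_t^k\le C_D\lambda\,\limsup_N\p(X_t^N(1)\ge k)$, which you derive by taking expectations of the indicator, does not follow from the route you sketch.

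The conclusion can be reached without the indicator, however, and your tail-estimate machinery is then a legitimate alternative to the paper's. What the paper actually uses is the intermediate inequality in \eqref{lambdaNbound} (before passing to the crude constant $C_D\lambda$): each summand contains at least one factor of the form $\pi_k^N(t)/N$ or $\pi_{k+1}^N(t)/N$, so that $\lambda_t^{k,N}\le \mathrm{const}\cdot\pi_k^N(t)/N$ up to bounded prefactors (for $D\ge 2$). Combined with the variance bound of Proposition~\ref{cotapi}, which shows $\pi_k^N(t)/N$ concentrates, the problem reduces to showing $\limsup_k\lim_N\E[\pi_k^N(t)/N]=\limsup_k\p(Q_t\ge k)=0$. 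At this point your Markov-inequality argument (bounded arrival rate $\Rightarrow$ bounded $\E[X_t^N(1)]$ uniformly in $N$ $\Rightarrow$ tail bound $O(1/k)$) is a perfectly good substitute for the paper's stochastic domination of $Q_t$ by a Poisson process. For the stationary statement, you correctly identify that the limit exchange and the tightness coming from positive recurrence are the crux; the paper simply cites stability of the tagged queue and defers the details to Section~\ref{sec:stat}, which is consistent with your account.
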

\begin{proof}
	By \eqref{lambdaNbound}, we know that 
	\begin{align*}
	 \lambda_t^{k,N} \leq   \lambda  &  \Big( \frac{N}{N-D+1}\Big) \Big( \frac{N}{N-D+2}\Big) \cdots
	 \Big( \frac{N}{N-1}\Big) \sum_{i=0}^{D-1}  \Big( \frac{ \pi_{k+1}^N(t)}{N} \Big)  \Big( \frac{ \pi_{k+1}^N(t) -1}{N} \Big) \cdots
	 \Big( \frac{ \pi_{k+1}^N(t)- (i-1)}{N} \Big) \\
	  \cdot &   \Big( \frac{ \pi_k^N(t)- (i+1)}{N} \Big)  \Big( \frac{ \pi_k^N(t)- (i+2)}{N} \Big) \cdots  \Big( \frac{ \pi_k^N(t)- (D-1)}{N} \Big).
	\end{align*}
	As $\lim\limits_{N \rightarrow \infty} \V \left(\pi_{k}^N(t)/N\right)=0$ by Proposition \ref{cotapi},
	 it is enough to show that 
	\[\limsup\limits_{k}\lim\limits_{N \rightarrow \infty} \E\left(\dfrac{\pi_{k}^N(t)}{N}\right) = 0.\] 
	For every fix $k \in \NN$, under the assumption of exchangeable initial distribution $X^N_0$, we have that 
	\[\lim\limits_{N \rightarrow \infty} \E\left(\dfrac{\pi_{k}^N(t)}{N}\right)=\lim\limits_{N \rightarrow \infty} \dfrac{\sum\limits_{i=1}^N\p\left(X^N_t(i) \geq k \right)}{N}= \p\left(Q_t \geq k\right),	\]
		where $Q_t$ is the process defined in Theorem \ref{theo:conv}.
		 As for fixed $t$, the process $Q$ can be bounded by a Poisson process with fixed intensity using bound (3.3), we have that  $\limsup\limits_{k}\p\left(Q_t \geq k\right)=0$ and the desired result follows.
		 For the stationary version, this a direct consequence of the assumption of stability of the tagged queue.	
\end{proof}

\section{Convergence in the stationary regime}\label{sec:stat}

The previous results imply convergence of the stationary version of a finite subset of $l$ queues to a product measure $\rho^{\bigotimes l}$ where $\rho$ is the stationary measure of the tagged particle.
One should however notice that this is so because the correlation estimates {\bf do not depend} on the initial conditions of the system. 

Indeed, by \cite{B}, there exists a stationary measure for the system of $N$-queue while the existence of a stationary measure of the limiting tagged queue is easily obtained via a Lyapunov criterion.
We can hence state the following Proposition.

\begin{proposition}
Under the stationary regime, for any fixed $k$, $ \Big(X^N_{\infty}(1), \ldots X^N_{\infty}(l) \Big)$ converges in total variation to $\rho^{\bigotimes l}$,
where $\rho$ is the stationary distribution of the tagged queue.
Moreover if the service policy is symmetric, $\rho([k,\infty))=\lambda^{(D^k-1)/(D-1)}$.
Finally if the service time distribution has an exponential moment, then there exist positive constants $c_1,c_2$ and $\alpha>0$ such that:
$$|| \mathbb P _{ X^N_{\infty} (1) }   -  \rho ||_{TV} \le  c_1 \ln (N)/N + c_2 N^{-\alpha}.$$ 
\end{proposition}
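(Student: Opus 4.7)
The proposition bundles three distinct statements; I would treat them sequentially, leveraging that the existence of a stationary measure $\mu^N$ for the finite $N$-server system is given by \cite{B}, while existence and uniqueness of $\rho$ for the limiting tagged queue follow from a Foster--Lyapunov argument with the linear Lyapunov function $V(k)=k$: by Corollary \ref{limsuplambda} the asymptotic arrival rate $\lambda^k$ tends to $0$ as $k\to\infty$, so it is eventually strictly smaller than the service rate of any work-conserving policy with unit-mean service, giving positive recurrence.

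For the product-form convergence, I would start the $N$-queue system in $\mu^N$, so that the marginal of a single coordinate is stationary and, by symmetry, the same for all coordinates. Applying Theorem \ref{theo:conv} with initial law $\mu^N$ (which is legitimate because the correlation and coupling estimates in Section \ref{sec:chaos} are uniform over the initial configuration) yields $\mathbb{P}_{X^N_\infty(1)}\to\rho$ in total variation. The joint convergence to $\rho^{\otimes l}$ would then be obtained by combining this marginal convergence with the observation that Proposition \ref{propagationofchaos} (and the $l$-fold analogue, proved by exactly the same clan-of-ancestors argument applied to $l$ disjoint labels, whose pairwise intersections are controlled by Lemma \ref{cota}) makes the joint law of any $l$ coordinates asymptotically decorrelated uniformly in the starting condition, and hence also when we start from $\mu^N$.

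For the insensitivity formula under symmetric disciplines, I would use the asymptotic independence to identify the balance equations satisfied by $q_k:=\rho([k,\infty))$. In the limit, the rate at which an arrival is routed to a queue whose length lies in $[k-1,\infty)$ equals $\lambda(q_{k-1}^D-q_k^D)$ (the probability that the minimum among $D$ i.i.d.\ samples is $\geq k-1$ but not $\geq k$), while for a work-conserving symmetric policy the total departure rate from a queue of length $\geq k$ equals $q_k-q_{k+1}$ (this is where symmetry enters: the long-run fraction of service effort delivered to the positions with index $\geq k$ matches the mass of the stationary distribution at length exactly $k$, giving rate $1$ per busy server). Equating gives the recursion $\lambda(q_{k-1}^D-q_k^D)=q_k-q_{k+1}$ with $q_0=1$, whose unique bounded solution is $q_k=\lambda^{(D^k-1)/(D-1)}$.

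The rate estimate is the part I expect to be the hardest, because it requires trading the time horizon against the coupling error. Assuming an exponential moment for the service time, a standard Foster--Lyapunov argument with an exponential Lyapunov function $V(k)=e^{\eta k}$ provides geometric ergodicity of both the tagged queue and (uniformly in $N$) of each single server, so that their transient laws approach their stationary laws at rate $e^{-\alpha t}$ for some $\alpha>0$. I would then fix $t_N=c\log N$ with $c$ chosen small, run the coupling of Theorem \ref{theo:conv} for time $t_N$ using the correlation bound from Corollary \ref{velocity} (which yields an error of order $N^{-1}(1+(3/2)^D e^{2^D\lambda D t_N})$, i.e., $N^{c'-1}$), and control the residual mixing errors by $e^{-\alpha t_N}=N^{-\alpha c}$. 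Choosing $c$ so that $c'=1$ gives the $\log(N)/N$ prefactor (coming from the logarithmic refinement in Proposition \ref{propagationofchaos} at the critical scaling), while the mixing remainder contributes the $N^{-\alpha}$ term; the optimization of $c$ against $\alpha$ is the delicate technical point.
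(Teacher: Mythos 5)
Your outline reproduces the paper's proof structure: start the $N$-server system in its stationary law, use the triangle inequality to split $\|\mathbb{P}_{X^N_\infty(1)}-\rho\|_{TV}$ into a finite-$N$ coupling error (controlled by the correlation/clan estimates, which are uniform in the initial configuration) and a mixing error for the limiting tagged queue, establish positive recurrence of the latter via a Lyapunov criterion using Corollary \ref{limsuplambda}, and for the rate take $t_N = c\log N$ to balance the two error terms under exponential ergodicity. That is exactly the paper's route. The one genuine (if minor) divergence is the insensitivity formula: the paper invokes Zachary's insensitivity theorem for symmetric disciplines and then reads off the exponential-case fixed point, whereas you derive the recursion $\lambda(q_{k-1}^D-q_k^D)=q_k-q_{k+1}$ directly by level-crossing balance and solve it; your derivation still implicitly relies on insensitivity (your step equating the downward-crossing rate at level $k$ with $q_k-q_{k+1}$ is precisely the insensitivity assertion), so it does not bypass the reference, but it does make the origin of the formula $\lambda^{(D^k-1)/(D-1)}$ more transparent. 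One caveat worth flagging: your statement that the coupling error is of order $N^{-1}(1+(3/2)^D e^{2^D\lambda D t_N})$ quotes the covariance bound of Corollary \ref{velocity}; the actual quantity needed in the triangle inequality is $\|\mathbb{P}_{X^N_t(1)}-\mathbb{P}_{Q_t}\|_{TV}$, which is controlled via the integrated rate difference $\int_0^t|\lambda_s^{Q_s,N}-\lambda_s^{Q_s}|\,ds$ in Theorem \ref{theo:conv} (involving both the variance bound and the monotone convergence of the means), so the quantitative tracking requires a little more than the covariance estimate alone — but the scaling in $t$ and $N$ is the same, and the paper itself leaves this optimization at a comparable level of detail.
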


\begin{remark}
The constant $\alpha$ is determined by the speed of convergence to stationarity of the cavity process.
\end{remark}

\begin{proof}
Starting the system in stationary regime, that is $X^N_t (1)\overset{\mathcal{D}}{=} X^N_{\infty} (1)$, and using the previous results (which can be done using the uniformity on the initial distribution): 
\begin{equation}\label{triangle}
     || \mathbb P^{\pi^N} _{ X^N_{t} (1) }   -  \rho ||_{TV} \leq   || \mathbb P _{ X^N_{t} (1) }  -  \mathbb P_{X^{\infty}_t  } ||_{TV}   +  || \mathbb P _{ X^{\infty}_{t}  }   -  \rho ||_{TV}  .
\end{equation}

The first term can be controlled as previously by our correlation estimates. Hence the result follows if we can control the second one, that is, to prove positive recurrence of the tagged queue. This is done in the Appendix \ref{appstat}. In the case of symmetric service policy, the results of \cite{Z} allow to conclude insensitivity of the tagged queue and hence aysmptotic insensitivity of the system. If in addition, the service time distribution has an exponential moment, one can show (see \ref{appstat}) that the convergence towards the stationary measure is exponential, and hence choosing $t=T_N= c \log(N)$ in the previous equality gives the desired result.

% Now, since Theorem \ref{convergence} gives us a rate $\lambda_t^\infty$  independent of the distribution of the services we can put exponential services. In that case, \cite{VDK} proved the convergence of the third term of the r.h.s. in \eqref{triangle}, so there exists $T_2$ such that for all $t \geq T_2$ we have that
% $$ || \mathbb P _{ X^{\infty}_{t} (1) }   -  \rho ||_{TV}  < \frac{\varepsilon}{3}.$$
% Now, take $T= \max \{T_1,T_2\}$. By the first part, we can choose $N$ such that for all $M \geq N$ we have 
% $$ || \mathbb P _{ X^M_{T} (1) }  -  \mathbb P_{X^{\infty}_T  } ||_{TV} < \frac{\varepsilon}{3}, $$
% and we are done.

\end{proof}

% In the particular case of a PS-queue with exponential services the candidate \eqref{mk} is indeed the stationary measure.
% 	
% The intuition of last remark is the following. Assume that almost surely
% \begin{equation}\label{assu}
% \lim_{N \rightarrow \infty}	\frac{ \pi_k(X_t^N)   }{N}  = C \phi_k ,
% \end{equation}
% for some positive constants $\phi_k$ and some normalizing constant $C$ (which depends on $D$). 
% 
% The generalized balance equations \cite{Z} for the tagged queue $X^N(1)$ with exponential services of rate $1$ take the form
% $$   \mu^N( k+1)  \cdot 1 =  \mu^N (k) \cdot   \lambda_t^{X^N(1)}   \qquad  \forall k \geq 0 . $$
% Taking the limit when $N$ grows to infinity, under the assumption \eqref{assu}, one obtains the balance equations for the idealized infinite system:
% $$  \mu( k+1)  =  \mu (k) \cdot  \lambda D  [ C \phi_k  ]^{D-1}  \qquad  \forall k \geq 0  .$$
% By chaos propagation, we have that $\lim_{N \rightarrow \infty } m (X_t) (k) =  \mu(k) $, therefore the solution for last equations must also satisfy that
% $$ \mu(k) = C_2 ( \phi_{k+1} - \phi_k).$$ 
% By choosing $\phi_k= C \lambda^{\frac{D^k-1}{D-1} } $ and the distribution $\mu(k)= C_3 (\lambda^{\frac{D^k-1}{D-1} }  -  \lambda^{\frac{D^{k+1}-1}{D-1} } ) $ both conditions are met.
% 

\appendix
\section{Appendix}

\subsection{Propagation of Chaos}\label{approp}

\subsubsection{Proof of Lemma \ref{cota} for general $D$:}

	First we show how to obtain \eqref{clanbound}. Note that	
	\begin{align*}
	\dfrac{d\mathbb{E}\left[|\psi^i(t)|\big|\sigma(\omega[-t,0))\right]}{dt}& =\lambda N\sum\limits_{k=1}^D (D-k)\mathbb{P}[\text{choose k servers in } \psi^i(t) ] 
	=\lambda N\sum\limits_{k=1}^{D-1} (D-k)\left(\dfrac{{|\psi^i(t)| \choose k}{N- |\psi^i(t)| \choose D-k}}{{N \choose D}}\right)\\
	& = \lambda N\sum\limits_{r=1}^{D-1} r\left(\dfrac{{|\psi^i(t)| \choose D-r}{N- |\psi^i(t)| \choose r}}{{N \choose D}}\right)\\
	& = \lambda N\left( \dfrac{(N- |\psi^i(t)| )D}{N} - D\mathbb{P}[\text{don't choose any servers in } \psi^i(t) ]\right)\\
	& = \lambda DN \left( 1  - \dfrac{|\psi^i(t)|}{N} - \mathbb{P}[\text{don't choose any servers in } \psi^i(t) ]  \right) .
	%&\leq \lambda D \left[N \left(1 - \dfrac{\mathbb{E}\left[{N-|\psi^i(t)|\choose D}\right]}{{N \choose D}} \right) - \mathbb{E}[|\psi^i(t)|] \right]
	\end{align*}
	Observe that if $|\psi^i(t)|> N - D$,  $\mathbb{P}[\text{don't choose any servers in } \psi^i(t) ]= 0$ while otherwise:
	\begin{align*}
	\mathbb{P}[\text{don't choose any servers in } \psi^i(t) ] & = \dfrac{{N-|\psi^i(t)| \choose D }}{{N \choose D}}
	= \dfrac{(N-|\psi^i(t)|)!}{(N-|\psi^i(t)|-D)!}\dfrac{(N-D)!}{N!}\\
	& = \dfrac{N-|\psi^i(t)|}{N} \dfrac{N-|\psi^i(t)|-1}{N-1}\cdots \dfrac{N-|\psi^i(t)|-D+1}{N-D+1}\\
	&=  \left(1 - \frac{|\psi^i(t)|}{N} \right)\left(1 - \frac{|\psi^i(t)|}{N-1}\right)\cdots\left(1-\frac{|\psi^i(t)|}{N-D+1}\right)\\
	&=  1 - | \psi^i(t)| \Big( \sum_{k=0}^{D-1} \frac{1}{N-k}  \Big) + | \psi^i(t)|^2 \Big( \sum_{ k,h \in \{ 0,...,D-1\} , \, k \neq h} \frac{1}{ (N-k)(N-h) }  \Big) \\
	&+ \dots   + (-1)^D  | \psi^i(t)|^{D}  \frac{1}{N (N-1) \cdots (N-D+1)},
	\end{align*}
	where the last equality is proven by induction. Hence, we obtain the following bound for the derivative	
	\begin{align*}
	\dfrac{d\mathbb{E}\left[|\psi^i(t)|\big|\sigma(\omega[-t,0))\right]}{dt}  
	& = \lambda DN \left( 1  - \dfrac{|\psi^i(t)|}{N} - \mathbb{P}[\text{don't choose any servers in } \psi^i(t) ]  \right) \\
	& = \lambda DN \left(    1  - \dfrac{|\psi^i(t)|}{N} -  \textbf{1}_{|\psi^i(t)|\leq N-D}\left(1 - \frac{|\psi^i(t)|}{N} \right) \left(1 - \frac{|\psi^i(t)|}{N-1}\right)  \cdots 
	\left(1 - \frac{|\psi^i(t)|}{N-D + 1} \right) \right) \\
	& = \lambda DN  \left( 1  - \dfrac{|\psi^i(t)|}{N} \right) \left[1   -   \textbf{1}_{|\psi^i(t)|\leq N-D}\left(1 - \frac{|\psi^i(t)|}{N-1}\right)  \cdots \left(1 - \frac{|\psi^i(t)|}{N-D + 1} \right) \right] 	\\	
	& \leq \lambda DN  \left( 1  - \dfrac{|\psi^i(t)|}{N} \right)\left[\textbf{1}_{|\psi^i(t)|> N-D} + \textbf{1}_{|\psi^i(t)|\leq N-D}\sum\limits_{k=1}^{D-1}{D-1 \choose k}\left(\dfrac{|\psi^i(t)|}{N-D}\right)^k\right]	\\  
	&\leq \dfrac{2^{D-1}\lambda DN}{N-D}|\psi^i(t)|\left( 1  - \dfrac{|\psi^i(t)|}{N} \right).
	\end{align*}
	In the last inequality we used that $\dfrac{1}{N-k}\leq \dfrac{1}{N-D}$ for all $k \in \{1,2,\cdots,D-1\}$ and the Newton's general binomial theorem:
	\begin{align*}  
	\dfrac{N-D}{|\psi^i(t)|}\textbf{1}_{|\psi^i(t)|> N-D} + \textbf{1}_{|\psi^i(t)|\leq N-D}\sum\limits_{k=1}^{D-1}{D-1 \choose k}\left(\dfrac{|\psi^i(t)|}{N-D}\right)^{k-1}
	&\leq \textbf{1}_{|\psi^i(t)|> N-D} + \textbf{1}_{|\psi^i(t)|\leq N-D}\sum\limits_{k=1}^{D-1}{D-1 \choose k}\\
	&\leq  \sum\limits_{k=0}^{D-1}{D-1 \choose k} = 2^{D-1}.
	\end{align*}
 Therefore \eqref{clanbound} follows. \\ 

Now we prove \eqref{cota2}. We have
	 
	\begin{align*}
& \dfrac{d\mathbb{P}(\psi^i(s) \cap \psi^j(s) \neq \emptyset \big| \sigma(\omega[-s, 0)))}{ds} = \lambda N \mathbb{P}\left[\psi^i(s) \cap \psi^j(s) \neq \emptyset \big|\psi^i(s^-) \cap \psi^j(s^-) = \emptyset \right]\\
	=& \lambda N  \left[1- \dfrac{{N -|\psi^i(s^-)| -|\psi^j(s^-)| \choose D }}{{N \choose D}} - \sum\limits_{r=1}^D\dfrac{{N -|\psi^i(s^-)| -|\psi^j(s^-)| \choose D -r }{|\psi^i(s^-)| \choose r}}{{N \choose D}} -  \sum\limits_{r=1}^D\dfrac{{N -|\psi^i(s^-)| -|\psi^j(s^-)| \choose D -r }{|\psi^j(s^-)| \choose r}}{{N \choose D}}\right] \\
	\cdot& \textbf{1}_{\psi^i(s^-)\cap \psi^j(s^-)=\emptyset} \\
	=& \lambda N  \left[1+ \dfrac{{N -|\psi^i(s^-)| -|\psi^j(s^-)| \choose D }}{{N \choose D}} -\dfrac{{N-|\psi^j(s^-)| \choose D}}{{N \choose D}} - \dfrac{{N-|\psi^i(s^-)| \choose D}}{{N \choose D}}  \right]\textbf{1}_{\psi^i(s^-)\cap \psi^j(s^-)=\emptyset},
	\end{align*}	
	where we used that
	\begin{align*}
	\sum\limits_{r=1}^D\dfrac{{N -|\psi^i(s^-)| -|\psi^j(s^-)| \choose D -r }{|\psi^j(s^-)| \choose r}}{{N \choose D}} &  = \dfrac{{N-|\psi^i(s^-)| \choose D}}{{N \choose D}}\sum\limits_{r=1}^D\dfrac{{N -|\psi^i(s^-)| -|\psi^j(s)| \choose D -r }{|\psi^j(s^-)| \choose r}}{{N-|\psi^i(s^-)| \choose D}} \\
	&= \dfrac{{N-|\psi^i(s^-)| \choose D}}{{N \choose D}} \left[1- \dfrac{{N -|\psi^i(s)| -|\psi^j(s^-)| \choose D}}{{N-|\psi^i(s^-)| \choose D}} \right]\\
	&= \dfrac{{N-|\psi^i(s^-)| \choose D}}{{N \choose D}} - \dfrac{{N -|\psi^i(s)| -|\psi^j(s^-)| \choose D}}{{N \choose D}}.
	\end{align*}		
	
	Hence, using the same strategy to prove \eqref{clanbound} above,  
	\begin{align*}
&\dfrac{d\mathbb{P}(\psi^i(s) \cap \psi^j(s) \neq \emptyset \big| \sigma(\omega[-s, 0)))}{ds} = 
\lambda N  \left[1+ \dfrac{{N -|\psi^i(s^-)| -|\psi^j(s^-)| \choose D }}{{N \choose D}} -\dfrac{{N-|\psi^j(s^-)| \choose D}}{{N \choose D}} - \dfrac{{N-|\psi^i(s^-)| \choose D}}{{N \choose D}} \right] \\
&\cdot \textbf{1}_{\psi^i(s^-)\cap \psi^j(s^-)=\emptyset} \\
	=  &\lambda N  \left[1 +  \textbf{1}_{\{|\psi^i(s^-)|+|\psi^j(s^-)|\leq N-D\}}\left(1-\sum\limits_{k=0}^{D-1}\frac{|\psi^i(s^-)|+|\psi^j(s^-)|}{N-k} +\cdots + (-1)^D\frac{(|\psi^i(s^-)|+|\psi^j(s^-)|)^D}{N(N-1)\cdots(N-D+1)}\right)\right.\\
	& - \textbf{1}_{\{|\psi^i(s^-)|\leq N-D\}} \left(1-\sum\limits_{k=0}^{D-1}\frac{|\psi^i(s^-)|}{N-k} +\cdots + (-1)^D\frac{|\psi^i(s^-)|^D}{N(N-1)\cdots(N-D+1)}\right) \\
	& \left. -\textbf{1}_{\{|\psi^j(s^-)|\leq N-D\}}\left(1-\sum\limits_{k=0}^{D-1}\frac{|\psi^j(s^-)|}{N-k} +\cdots + (-1)^D\frac{|\psi^j(s^-)|^D}{N(N-1)\cdots(N-D+1)}\right)\right] \textbf{1}_{\{\psi^i(s^-)\cap \psi^j(s^-)=\emptyset\}}.  		
	\end{align*}
	Since $ \textbf{1}_{\{|\psi^i(s^-)|+|\psi^j(s^-)|\leq N-D\}} \leq  \textbf{1}_{\{|\psi^i(s^-)|\leq N-D\}}$ and $ \textbf{1}_{\{|\psi^i(s^-)|+|\psi^j(s^-)|\leq N-D\}} \leq  \textbf{1}_{\{|\psi^i(s^-)|\leq N-D\}}$,
	it follows that
	\begin{align*}
&\dfrac{d\mathbb{P}(\psi^i(s) \cap \psi^j(s) \neq \emptyset \big| \sigma(\omega[-s, 0)))}{ds}  \leq  \lambda N  \left[1 + \textbf{1}_{\{|\psi^i(s^-)|+|\psi^j(s^-)|\leq N-D\}} \left(1-\sum\limits_{k=0}^{D-1}\frac{|\psi^i(s^-)|+|\psi^j(s^-)|}{N-k} +\cdots \nonumber\right.\right.\\
	&+ (-1)^D\frac{(|\psi^i(s^-)|+|\psi^j(s^-)|)^D}{N(N-1)\cdots(N-D+1)} - 1+ \sum\limits_{k=0}^{D-1}\frac{|\psi^j(s^-)|}{N-k} -\cdots - (-1)^D\frac{|\psi^j(s^-)|^D}{N(N-1)\cdots(N-D+1)} \\
	& \left.\left. -1+\sum\limits_{k=0}^{D-1}\frac{|\psi^i(s^-)|}{N-k} -\cdots - (-1)^D\frac{|\psi^i(s^-)|^D}{N(N-1)\cdots(N-D+1)}\right)\right]\textbf{1}_{\{\psi^i(s^-)\cap \psi^j(s^-)=\emptyset\}}\\
	\leq  &\lambda N  \left[\textbf{1}_{\{|\psi^i(s^-)|+|\psi^j(s^-)|> N-D\}} + \textbf{1}_{\{|\psi^i(s^-)|+|\psi^j(s^-)|\leq N-D\}}\left(\sum\limits_{k=0}^{D-1}\sum\limits_{h=0,h\neq k}^{D-1}\frac{2|\psi^i(s^-)||\psi^j(s^-)|}{(N-k)(N-h)} +\cdots \nonumber \right. \right.\\
	& \left.\left. + (-1)^D\frac{(|\psi^i(s^-)|+|\psi^j(s^-)|)^D - |\psi^j(s)|^D - |\psi^i(s^-)|^D }{N(N-1)\cdots(N-D+1)}\right)\right]\textbf{1}_{\{\psi^i(s^-)\cap \psi^j(s^-)=\emptyset\}} \\
	\leq &  \lambda N  \left[\textbf{1}_{\{|\psi^i(s^-)|+|\psi^j(s^-)|> N-D\}} + \textbf{1}_{\{|\psi^i(s^-)|+|\psi^j(s^-)|\leq N-D\}}\sum\limits_{k=2}^D{D \choose k}\sum\limits_{r=1}^{k-1}{k\choose r} \frac{|\psi^i(s^-)|^r|\psi^j(s^-)|^{k-r}}{(N-D)^k}\right] \\
	&\cdot \textbf{1}_{\{\psi^i(s^-)\cap \psi^j(s^-)=\emptyset\}}.
	\end{align*}
	
	Now we can bound
	\begin{align*}
	\lambda N & \left[\textbf{1}_{\{|\psi^i(s^-)|+|\psi^j(s^-)|> N-D\}} + \textbf{1}_{\{|\psi^i(s^-)|+|\psi^j(s^-)|\leq N-D\}} \sum\limits_{k=2}^D{D \choose k}\sum\limits_{r=1}^{k-1}{k\choose r} \frac{|\psi^i(s^-)|^r|\psi^j(s^-)|^{k-r}}{(N-D)^k}\right]\\
	= &\dfrac{\lambda N|\psi^i(s^-)||\psi^j(s^-)| }{(N-D)^2}\left[\textbf{1}_{\{|\psi^i(s^-)|+|\psi^j(s^-)|> N-D\}} \frac{(N-D)^2}{|\psi^i(s^-)||\psi^j(s^-)|}\right. \\
	&\left. + \textbf{1}_{\{|\psi^i(s^-)|+|\psi^j(s^-)|\leq N-D\}}\sum\limits_{k=2}^D{D \choose k} \sum\limits_{r=1}^{k-1} {k\choose r}\frac{|\psi^i(s^-)|^{r-1}|\psi^j(s^-)|^{k-r-1}}{(N-D)^{k-2}}\right]\\
	\leq & \dfrac{\lambda N|\psi^i(s^-)||\psi^j(s^-)| }{(N-D)^2}\left[\textbf{1}_{\{|\psi^i(s^-)|+|\psi^j(s^-)|> N-D\}}  + \textbf{1}_{\{|\psi^i(s^-)|+|\psi^j(s^-)|\leq N-D\}}\sum\limits_{k=2}^D{D \choose k}\sum\limits_{r=1}^{k-1}{k\choose r}\right] \\
	\leq & \dfrac{3^D\lambda N|\psi^i(s^-)||\psi^j(s^-)| }{(N-D)^2}.
	\end{align*}
	From this bound, \eqref{cota2} follows in a similar way as in the case $D=2$. 
\begin{flushright}
$ \square$
\end{flushright}

\subsection{Convergence} \label{apconv}

\subsubsection*{Proof of Lemma \ref{lambdasum}:}

Note that:
\begin{align}\label{lambdahy}
\lambda^N (x,k)& =  \lambda N\sum\limits_{i=1}^D \frac{1}{i}  \frac{ { \pi_k^N(x)-\pi_{k+1}^N(x) -1    \choose i-1 }{\pi_{k+1}^N(x)\choose D-i }}{ {N  \choose D}}
= \dfrac{\lambda N}{{N  \choose D}} \, \frac{{ \pi_k^N(x) \choose D}}{(\pi_k^N(x)-\pi_{k+1}^N(x))} \,  \sum\limits_{i=1}^D \frac{ { \pi_k^N(x)-\pi_{k+1}^N(x)   \choose i }{\pi_{k+1}^N(x)\choose D-i }}{ { \pi_k^N(x) \choose D}} \nonumber   \\
& =  \dfrac{\lambda N}{{N  \choose D}} \frac{{ \pi_k^N(x) \choose D}}{(\pi_k^N(x)-\pi_{k+1}^N(x))}\left[1- \frac{ {\pi_{k+1}^N(x)\choose D}}{ { \pi_k^N(x) \choose D}}\right]
= \dfrac{\lambda N}{ {N  \choose D}  }   \left[  \frac{  { \pi_k^N(x) \choose D} -  {\pi_{k+1}^N(x)\choose D}  }{ \pi_k^N(x)-\pi_{k+1}^N(x)  }  \right].
\end{align}
On the other hand, for $D \leq a<b$ and putting $c=b-a$, we have that
$$ a( a+ c -1 ) ... (a+c -(D-1)) = a(a-1)...(a-(D-1)) + c S^D(a,b) $$
by just expanding conveniently the product of the l.h.s.. Put $a=\pi_{k+1}^N(x)$ and $b=\pi_{k}^N(x)$. Then last formula translates in
$$ D! {\pi_{k}^N(x) \choose D } = D! {\pi_{k+1}^N(x)\choose D }  +  (\pi_k^N(x)-\pi_{k+1}^N(x) )  S(\pi_{k+1}^N(x), \pi_{k}^N(x)),$$
and plugging this in \eqref{lambdahy} gives the desired conclusion.
\begin{flushright}
	$\square$	
\end{flushright}

\subsubsection*{Proof of Lemma \ref{monotone}:}

Define 
$$ B_1 :=  \I_{ \{ x(1)=k, x(N+1) > k \} }  (\lambda D )  \sum \limits_{i=1}^D \frac{1}{i} \frac{ { \pi_k^N(x)-\pi_{k+1}^N(x) -1    \choose i-1 }{\pi_{k+1}^N(x) \choose D-1-i }}{ {N  \choose D-1}  } $$ 
and
$$ B_2 :=  \I_{ \{ x(1)=k, x(N+1) = k  \} } (\lambda D)  \sum \limits_{i=2}^D \frac{1}{i} \frac{ { \pi_k^N(x)-\pi_{k+1}^N(x) -1    \choose i-2 }{\pi_{k+1}^N(x) \choose D-i } }{ {N  \choose D-2}  }  $$ 
which correspond to the the second and third summand of the r.h.s. of equation \eqref{lambdaN+1}.

We first compute the value of $B_1$ and $B_2$. Note that $B_1$ has a similar expression as the one for $\lambda^{N}(x,k)$ in Lemma \ref{lambdasum}: we have rate $\lambda D$ instead of $\lambda N$, the value $D-1$ instead of $D$ and an extra indicator function $\I _{ \{ x(N+1) > k  \}}$. Hence
$$ B_1 =  \I_{ \{ x(1)=k, x(N+1) > k  \} } (\lambda D) \frac{ (N - (D-1))! }{N !}  S^{D-1} ( \pi^{N}_{k+1} (x), \pi^{N}_{k} (x) ) .$$

On the other hand, we can obtain a lower bound for $B_2$:
\begin{eqnarray*}
 B_2 & := &  \I_{ \{ x(1)=k, x(N+1) = k  \}} (\lambda D)  \sum \limits_{i=2}^D \frac{1}{i} \frac{ { \pi_k^N(x)-\pi_{k+1}^N(x) -1    \choose i-2 }{\pi_{k+1}^N(x) \choose D-i } }{ {N  \choose D-2}  }  \\
 &=& \I_{ \{ x(1)=k, x(N+1) = k  \}}  \frac{ \lambda D}{ { N \choose D-2 }}  \sum \limits_{j=1}^{D-1} \Big( \frac{j}{j+1} \Big) \frac{1}{j}  { \pi_k^N(x)-\pi_{k+1}^N(x) -1    \choose j-1 }  {\pi_{k+1}^N(x) \choose D-1-j }.
\end{eqnarray*}
Since $ \frac{j}{j+1} \geq \frac{1}{2}$ for all $j=1,...,D-1$, we have that
\begin{eqnarray}\label{boundB_2}
	B_2 & \geq  & \I_{ \{ x(1)=k, x(N+1) = k  \} }  \frac{ \lambda D}{ 2 { N \choose D-2 }} 
	 \sum \limits_{j=1}^{D-1}  \frac{1}{j}  { \pi_k^N(x)-\pi_{k+1}^N(x) -1  \choose j-1 } {  {\pi_{k+1}^N(x) \choose D-1-j } }  \nonumber \\
 & = & \I_{ \{ x(1)=k, x(N+1) = k  \}}  \frac{ \lambda D}{ 2 { N \choose D-2 }} \frac{ {\pi_{k}^N(x) \choose D-1 }  }{ \pi_k^N(x)-\pi_{k+1}^N(x) } 
 \Big(  1 - \frac{ {\pi_{k+1}^N(x) \choose D-1 }    }{ {\pi_{k}^N(x) \choose D-1 } } \Big)  \nonumber \\
 & =&  \I_{ \{x(1)=k, x(N+1) = k  \} }  \frac{ \lambda D}{ 2 { N \choose D-2 }}
 \Big(   \frac{ {\pi_{k}^N(x) \choose D-1 } - {\pi_{k+1}^N(x) \choose D-1 }  }{   \pi_k^N(x)-\pi_{k+1}^N(x)  } \Big)  \nonumber \\
 &= &  \I_{ \{ x(1)=k, x(N+1) = k  \}}   \frac{ \lambda D}{ 2 { N \choose D-2 }}  \frac{ S^{D-1}( \pi_{k+1}^N(x), \pi_{k}^N(x)  )  }{ (D-1)!} \nonumber \\
 &=&  \I_{ \{x(1)=k, x(N+1) = k  \}}  \frac{ \lambda D}{ 2 (D-1)} \frac{ (N-(D-2))!}{N!}  S^{D-1}( \pi_{k+1}^N(x), \pi_{k}^N(x)  ),
 \end{eqnarray}
where we used Lemma \ref{lambdasum} at the end. 

Now we can compare $\lambda^{N+1}(x,k)$ and $\lambda^{N}(x,k)$. We do it in cases, depending on the value of $x(N+1)$. Note that the effective arrival rate to the server $1$ associated to the red process is given by
$$  \I_{ \{x(1)=k \} } \lambda (D-1)  \frac{ (N-D)! }{N!} S^D ( \pi_{k+1}^N (x),\pi_{k}^N(x) ).$$ 

In the case $x(N+1)>k$, we have 
\begin{eqnarray*}
\lambda^{N+1}(x,k)- \lambda^{N}(x,k) & = &  B_1 - \lambda (D-1)  \frac{ (N-D)! }{N!} S^D ( \pi_{k+1}^N (x),\pi_{k}^N(x) )      \\
& = &    \lambda D \frac{(N-(D-1))!}{N!} S^{D-1}( \pi_{k+1}^N(x), \pi_k ^N(x)) - \lambda (D-1)  \frac{ (N-D)! }{N!} S^D ( \pi_{k+1}^N (x),\pi_{k}^N(x) )   \\
& = & \lambda  \frac{ (N-D)! }{N!}  \Big( D(N-(D-1)) S^{D-1}  ( \pi_{k+1}^N(x), \pi_k ^N(x)) -(D-1) S^{D} ( \pi_{k+1}^N(x), \pi_k ^N(x)) \Big).
\end{eqnarray*}
Using the identity
\begin{equation}\label{SD}
S^D ( \pi_{k+1}^N(x), \pi_k ^N(x)) = (  \pi_k ^N(x) -(D-1) ) S^{D-1} ( \pi_{k+1}^N(x), \pi_k ^N(x)) +  \pi_{k+1}^N(x) ( \pi_{k+1}^N(x) -1 )  \dots ( \pi_{k+1}^N(x)-(D-2)) ,
\end{equation}
it follows that 
\begin{eqnarray}\label{SD-SD-1}
	\lambda^{N+1}(x,k)- \lambda^{N}(x,k) & =  & \lambda  \frac{ (N-D)! }{N!} \Big[ (D(N-(D-1))-(D-1) \Big( (  \pi_k ^N(x) - (D-1) )   S^{D-1}  ( \pi_{k+1}^N(x), \pi_k ^N(x)) \nonumber \\
	&- &  \pi_{k+1}^N(x) ( \pi_{k+1}^N(x) -1 )  \dots ( \pi_{k+1}^N(x)-(D-2)) \Big)  \Big]. 
\end{eqnarray}
On the other hand, since $\pi_{k}^N(x)-\pi_{k+1}^N(x) \geq 1$, we can bound 
$$ \pi_{k+1}^N(x) ( \pi_{k+1}^N(x) -1 )  \dots ( \pi_{k+1}^N(x)-(D-2)) $$
$$ \leq \Big( \pi_{k+1}^N(x) ( \pi_{k+1}^N(x) -1 )  \cdots   ( \pi_{k+1}^N(x) -(D-3) ) \Big)   ( \pi_{k}^N(x) -(D-1) ) .$$
Note that $\pi_{k+1}^N(x) ( \pi_{k+1}^N(x) -1 )  \dots   ( \pi_{k+1}^N(x) -(D-3) )$ is the last term of the sum $S^{D-1}  ( \pi_{k+1}^N(x), \pi_k ^N(x))$ and its value is less or equal than the value of any of the other summands of $S^{D-1}  ( \pi_{k+1}^N(x), \pi_k ^N(x))$. The sum $S^{D-1}  ( \pi_{k+1}^N(x), \pi_k ^N(x))$ has $D-1$ summands, then
\begin{equation}\label{boundprod}
\pi_{k+1}^N(x) ( \pi_{k+1}^N(x) -1 )  \dots ( \pi_{k+1}^N(x)-(D-2)) \leq \frac{ ( \pi_{k}^N(x) -(D-1)) S^{D-1}  ( \pi_{k+1}^N(x), \pi_k ^N(x)) }{D-1}
\end{equation}
Plugging last inequality in \eqref{SD-SD-1} we obtain that
\begin{eqnarray*}
\lambda^{N+1}(x,k)- \lambda^{N}(x,k) & \geq  & \lambda  \frac{ (N-D)! }{N!} \Big[ D(N-(D-1)) \\
&-& (D-1) (  \pi_k ^N(x) - (D-1) ) (1 +  \frac{1}{D-1} ) S^{D-1}  ( \pi_{k+1}^N(x), \pi_k ^N(x)) \Big]  \\
& = &  \lambda  \frac{ (N-D)! D  S^{D-1}  ( \pi_{k+1}^N(x), \pi_k ^N(x)) }{N!} (N - \pi_k ^N(x) ) \geq 0,
\end{eqnarray*}
for all $N \geq 0$. \\

In the case $x(N+1)=k$, we have 
\begin{eqnarray*}
	\lambda^{N+1}(x,k)- \lambda^{N}(x,k) & = & B_2 - \lambda (D-1)  \frac{ (N-D)! }{N!} S^D ( \pi_{k+1}^N (x),\pi_{k}^N(x) )  \\
	& \geq &  \frac{ \lambda (N-D)! }{N!}  \Big(  (N-(D-2))(N-(D-1)) \frac{D}{2(D-1)} S^{D-1} ( \pi_{k+1}^N (x),\pi_{k}^N(x) ) \\
	&-& (D-1) S^D ( \pi_{k+1}^N (x),\pi_{k}^N(x) ) \Big),
\end{eqnarray*}
where we used \eqref{boundB_2}. Similar to the other case, we use \eqref{SD} and \eqref{boundprod} to obtain 
\begin{eqnarray*}
	\lambda^{N+1}(x,k)- \lambda^{N}(x,k) & \geq  & \frac{ \lambda (N-D)! }{N!} \Big(  (N-(D-2))(N-(D-1)) \frac{D}{2(D-1)} S^{D-1} ( \pi_{k+1}^N (x),\pi_{k}^N(x) ) \\
	&-& (D-1) ( \pi_{k}^N(x) -(D-1) ) (1+ \frac{1}{D-1} ) S^{D-1} ( \pi_{k+1}^N (x),\pi_{k}^N(x) ) \Big)  \\
	& \geq  & \frac{ \lambda (N-D)! }{N!} \Big(  (N-(D-2))(N-(D-1)) \frac{D}{2(D-1)} S^{D-1} ( \pi_{k+1}^N (x),\pi_{k}^N(x) ) \\
	&-& (D-1) ( N-(D-1) ) ( \frac{D}{D-1} ) S^{D-1} ( \pi_{k+1}^N (x),\pi_{k}^N(x) ) \Big)  \\
	&=& \frac{ \lambda (N-(D-1))! D S^{D-1} ( \pi_{k+1}^N (x),\pi_{k}^N(x) ) }{2 (D-1)N!} (N+4-3D), 
\end{eqnarray*}
hence by putting $N_0=3D-4$ the proof is complete.
\begin{flushright}
	$\square$	
\end{flushright}

\subsubsection*{Proof of Theorem \ref{convergence} for general $D$:}

Using Lemma \ref{lambdasum} we have that
$$ \lambda_t^{k,N}=  1_{ \{X_t^N(1)=k \} }\lambda N \dfrac{(N-D)!}{N!}S(\pi^N_{k+1}(t),\pi^N_{k}(t) ), $$
where the function $S$ is defined by \eqref{eqsum}. Then 
\[ \V(\lambda_t^{k,N})=\lambda^2 \left(\dfrac{(N-D)!}{(N-1!)}\right)^2 \V (S(\pi^N_{k+1}(t),\pi^N_{k}(t) )).\] 
Using the bilinearity of covariance, we can rewrite the r.h.s. of the last equation as sum of  $C_1 =C_1(D)$ terms of the form 
$$\Cov\left((\pi^N_{k}(t))^n(\pi^N_{k+1}(t))^m,(\pi^N_{k}(t))^r(\pi^N_{k}(t))^s\right),$$
 where $0\leq n+m,r+s\leq D$. By definition,
\begin{align*}
\Cov&\left((\pi^N_{k}(t))^n(\pi^N_{k+1}(t))^m,(\pi^N_{k}(t))^r(\pi^N_{k}(t))^s\right) \\
&= \mathbb{E}\left[(\pi^N_{k}(t))^{n+r}(\pi^N_{k+1}(t))^{m+s}\right] - 
\mathbb{E}\left[(\pi^N_{k}(t))^n(\pi^N_{k+1}(t))^m\right]
\mathbb{E}\left[(\pi^N_{k}(t))^r(\pi^N_{k+1}(t))^s\right],
\end{align*}
and by the definition of $\pi^N_{k}(t)$, this is less or equal to $N^D C_2(D)$ terms of the form 
\begin{equation*}
\begin{aligned}
\mathbb{P}&\left[X_t(i_1)\geq k,...,X_t(i_{\tilde{n}})\geq k,X_t(i_{\tilde{n}+1})\geq k+1,...,X_t(i_{\tilde{n}+\tilde{m}})\geq k+1, X_t(j_{1})\geq k,...,X_t(j_{\tilde{r}})\geq k,X_t(j_{\tilde{r}+1})\geq k+1,\right.\\
&\left....,X_t(j_{\tilde{r}+\tilde{s}})\geq k+1\right] - 
\mathbb{P}\left[X_t(i_1)\geq k,...,X_t(i_{\tilde{n}})\geq k,X(i_{\tilde{n}+1})\geq k+1,...,X_t(i_{\tilde{n}+\tilde{m}})\geq k+1\right]\mathbb{P}\left[X_t(j_{1})\geq k,\right.\\
&\left....,X_t(j_{\tilde{r}})\geq k,X_t(j_{\tilde{r}+1})\geq k+1,...,X_t(j_{\tilde{r}+\tilde{s}})\geq k+1\right],
\end{aligned}
\end{equation*}
where $\tilde{n}\leq n, \tilde{m}\leq m, \tilde{r}\leq r, \tilde{s}\leq s$.
Using the same strategy as in the proof of Proposition \ref{propagationofchaos}, we have that each of these terms are bounded by $\mathbb{P}\left[\left( \cup_{k=1}^{\tilde{n} + \tilde{m}} \psi_{i_k}\right)\cap \left( \cup_{h=1}^{\tilde{r}+\tilde{s}}\psi_{j_h}\right)\neq \emptyset \right]\leq D^2 \mathbb{P}\left[\psi_i \cap \psi_j \neq \emptyset\right]$ .
Hence
\[
\V (S(\pi^N_{k+1}(t),\pi^N_{k}(t) )) \leq C(D)\dfrac{N^{D+1}}{N-D} \left[ N \ln \left(\frac{N+u_N(t)-1}{N}\right) - \dfrac{(N-1)(u_N(t)-1)}{(N+u_N(t)-1)}\right],
\]
where $u_N(t)$ is defined in \eqref{udef} and $C(D)$ is a constant that depends only on $D$. We conclude that $\V (\lambda_t^{k,N})$ is of order $N^{-D+2}$ and it goes to zero as $N$ grows. 
\begin{flushright}
	$\square$	
\end{flushright}

\subsection{Convergence to stationarity for the limiting tagged queue}\label{appstat}

Define the Markov process $Z_t=\big( X_t, R_1(t),\ldots, R_{X_t}(t) \big)$ on $ \mathbb N \times \cup_{i \in\mathbb N} \mathbb R^i$ describing the dynamics of the limiting tagged queue (with a fixed service discipline) with Poisson arrivals with state dependent intensity  $\lambda_\infty^k$ when the system has $k$ customers.
% By Bramson, for fix $N$, $\lim\limits_{t \rightarrow \infty} \lambda^{N,k}_t = \lambda^{N,k}_\infty$, and by previous result $\lim\limits_{N \rightarrow \infty}\lambda^{N,k}_\infty= \lambda^{k}_\infty$

Positive recurrence of $Z$ for symmetric service policies follows directly from the insensitivity property of the limiting tagged queue (see \cite{Z}) since its stationary distribution is known in the case of exponentially distributed service times.

For FIFO, this is a more subtle question and it has been proved by \cite{B}.

%For symmetric scheduling, the results of \cite{Z} imply that $\limsup_x \lambda(x) < 1$ is a sufficient condition for positive recurrence of the process $Z$. The same result can de deduced from the results of \cite{AM} for FIFO scheduling. As a consequence, in both cases, there exists an almost surely finite time $T$ such that for any initial condition, the system is empty at time $T$.

\begin{proposition}
The Markov process $Z$ converges in total variation towards a stationary distribution $\rho$. If moreover, the service time distribution has an exponential moment, then there exist constants $c_1$ and $c_2$ such that for all initial state $z$
$$
|| P^z_t(\cdot) -\rho  ||_{TV} \le C_1(z) \exp(- C_2 t).
$$
\end{proposition}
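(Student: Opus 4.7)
The plan is to combine the positive recurrence already established for $Z$ with a Foster--Lyapunov drift argument: this yields convergence in total variation in the first assertion, and under the exponential moment hypothesis an exponential drift inequality that upgrades to geometric ergodicity via a standard Harris--Meyn--Tweedie-type theorem in continuous time.

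For the first assertion, observe that $Z$ is a strong Markov Feller piecewise-deterministic process with natural atom $\mathbf{0} := (0)$. The uniform bound $\lambda_\infty^k \leq C_D \lambda$ from \eqref{lambdaNbound} shows that from any state $z$ with total unfinished work $W(z) := \sum_{i=1}^{X(z)} r_i$, the probability that no arrival occurs during $[0, W(z)]$ is at least $\exp(-C_D \lambda W(z)) > 0$; in that case the work-conserving dynamics empty the queue. This yields $\delta_{\mathbf{0}}$-irreducibility, aperiodicity (via the exponential inter-arrival law) and smallness of $\{\mathbf{0}\}$. Combined with positive recurrence --- from insensitivity and \cite{Z} in the symmetric case, and from \cite{B} for FIFO --- the classical Harris convergence theorem gives $\|P_t^z - \rho\|_{TV} \to 0$ for every $z$.

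For the quantitative rate, pick $\alpha > 0$ small enough that $\phi(\alpha) := \mathbb{E}[e^{\alpha S}]$ is finite by the exponential moment assumption, and set $V(z) := \exp(\alpha W(z))$. Since the discipline is work-conserving, $W$ decreases deterministically at unit rate whenever $X \geq 1$ and jumps at each arrival (of rate $\lambda_\infty^{X(z)}$) by an independent copy of $S$, so the extended generator satisfies
\[
\mathcal{L} V(z) = \bigl( \lambda_\infty^{X(z)} (\phi(\alpha) - 1) - \alpha\, \mathbf{1}_{\{X(z) \geq 1\}} \bigr) V(z).
\]
By Corollary \ref{limsuplambda} one has $\lambda_\infty^k \to 0$ as $k \to \infty$, so fixing $\alpha$ small and $k_0$ large enough yields $\mathcal{L} V \leq -(\alpha/2) V$ on $\{X \geq k_0\}$; intersecting with $\{W \leq M\}$ produces a compact set $C := \{X \leq k_0,\, W \leq M\}$ on which $V$ is bounded by $e^{\alpha M}$, and the Foster--Lyapunov inequality $\mathcal{L} V \leq -\gamma V + K \mathbf{1}_C$ holds for suitable constants $\gamma, K > 0$. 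The main obstacle is verifying that $C$ is a small set, which is delicate because of the continuous residual components: the idea is to exploit the same regeneration as in the first part, namely that from every $z \in C$ the probability of the queue emptying within the fixed horizon $M+1$ without any intervening arrival is bounded below by $\exp(-C_D\lambda (M+1))$, and that after the next arrival the state acquires an absolutely continuous component inherited from the density of $S$; this provides a uniform minorization of $P^z_{M+1}$ by a positive multiple of a common reference measure. The continuous-time geometric ergodicity theorem then delivers $\|P_t^z - \rho\|_{TV} \leq C_1(z)\, e^{-C_2 t}$ with $C_1(z) \propto V(z) = e^{\alpha W(z)}$ and $C_2 > 0$ depending only on $\gamma$ and the minorization constants.
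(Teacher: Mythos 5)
Your proof follows essentially the same route as the paper: a Foster--Lyapunov drift estimate for $V=e^{\theta W}$, made negative at large queue lengths via Corollary \ref{limsuplambda}, combined with smallness of bounded sets established by the emptying event (no arrival before the workload drains), and then a continuous-time Harris/geometric-ergodicity theorem (the paper uses a continuous-time version of Theorem 3.6 in \cite{Hairer}). One small remark: your minorization step invokes an absolutely continuous component of $S$, which is an unneeded extra hypothesis --- the paper instead works with a coupling of two trajectories sharing arrivals and services so that on the no-arrival-until-emptying event both hit the empty state $\mathbf{0}$ and coalesce; equivalently you could take $\nu=\delta_{\mathbf{0}}$ as the reference measure (requiring no arrivals up to the fixed horizon), avoiding any density assumption.
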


\begin{proof}
We apply a continuous time version of Theorem 3.6 in \cite{Hairer}.  \\
Define the Lyapunov function:
$$ V((x,r))=  \textbf{1}_{\{x > c\} } \exp \left( \theta \sum_{i  \le x} r_i\right).$$

We first need to prove that level sets of $V$ are small, in the sense that
there exists $\alpha>0$ and $t$ such that for all initial conditions $z,z'$ which satisfy
$$V(z)+V(z') \le \kappa,$$
we have that
$$ || P^z_t(\cdot) -P^{z'}_t(\cdot)  ||_{TV} \le (1-\alpha).$$

Define $Z^1$ and $Z^2$ two processes with initial conditions $z$ and $z'$.
Let us consider the usual coupling where the two processes get identical arrival and service times. Note that if the processes meet, as a consequence of the coupling, they get the exact same distribution after this meeting time.
%Note that the  meeting time $\tau_{z,z'}$ is almost surely finite since the processes $Z^1$ and $Z^2$ are supposed to be (Harris) positive recurrent.

Hence, 
$$ || P^z_{T_\kappa}(\cdot) -P^{z'}_{T_\kappa}(\cdot)  ||_{TV} \le  P( \tau_{z,z'} > T_{\kappa} ) \le P(N_{T_{\kappa}} =0)<1,$$
for $T_{\kappa}$ the deterministic time corresponding to the maximum workload such that $V(z)+V(z') \le \kappa,$ and $N$ a Poisson process with bounded rate (since the arrival rate was proved to be bounded).
Observe that the event $\{N_{T_{\kappa}} =0 \}$ hence implies that there is no arrival before both queues empty (indeed, since the scheduling is work conserving, if the workload  is bounded by $T_\kappa$, then both queues are empty at time $T_\kappa$ in the absence of further arrivals).

%ine the workload $W=\sum_i r_i$. Note that levels sets for $V$ correspond to level sets for $W$ if $x>c$.
% Define $W_max$ the maximum of the two initial workloads.

% Let us perform the following coupling between two processes with different inital conditions.
% If $x \le x'$, both processes receive an intensity $\lambda(x')$ and the same service requirements while with intensity $\lambda(x)-\lambda(x')$, only the smallest process receives arrivals.
% Note that with this coupling, for all work conserving service disciplines, both processes are such that the difference in the number of jobs is at most 1 after an almost surely finite time.
% Hence each time one of the process vanishes, the probability that the other process gets also to zero is smaller than $P(S \ge Exp(\sup_x \lambda))=p<1$ 
% 
% 
% Note that if we can couple the processes such that they are both $0$ in finite time, then the claim is proved.
% A first observation is that after the hitting time of $0$ of the largest of the two processes, the number of tasks
% in the two processes has to be equal at a random finite time $\tau$.
% We can couple the processes after that time with the same arrivals (if they are in the same position)
% and with the following coupling otherwise:
% 
% which concludes the claim, given that all the random times involved are almost surely finite.

Then observe that, by Corollary \ref{limsuplambda}, we indeed have that 
$\limsup_{k \to \infty} \lambda_\infty^k = 0$ when $\lambda <1$.

Recall that the workload evolution does not depend on the scheduling. Hence, for any differentiable function $f$ that depends only on the workload $w$, the drift of $f((X_t,R_t))$ given $(x,r)$, can be computed as
$$
\LL f(w)= \Big({d \over dt}\E^{(x,r)}(f(X_t, R_t)\Big)_{t=0}= -f'(w)+ \lambda_\infty^x \E(f(w+S)-f(w)).
$$
 for all work conserving disciplines. 
In the case where $x$ is big enough, $V(x,r)$ is equal to $e^{\theta w}$, where $w=\sum_{i \le x} r_i$ is the workload. Hence, the drift of $V$ for $x$ sufficiently big is given by: 
\begin{equation*}
\begin{split}
\LL V((x,r))&= - \theta V((x,r)) + \lambda_\infty^x  (\E(e^{\theta S})-1) V((x,r))\\
&= \big(- \theta + (\E(e^{\theta S})-1)) \lambda_\infty^x \big)  V((x,r)), 
\end{split}
\end{equation*}
%for $x$ large enough.
% & \le  \theta (-1 + (1-\epsilon)(\E(S)+ \theta \tilde \epsilon_\theta) V((x,r), 
%where we first used a Taylor development for $\theta$ close to $0$ and also that 

Note that if $x>c$, with $c$ large enough,
then $\lambda_\infty^k  (\E(e^{\theta S})-1)) \le  1-\epsilon $.
Therefore, for $x$ large enough:
\begin{align*}
\LL V((x,r))&\le - \gamma V((x,r)). 
\end{align*}

This drift inequality is then sufficient to apply Theorem 3.6 in \cite{Hairer}. 	
\end{proof}

\newpage
\bibliographystyle{amsplain}
\addcontentsline{toc}{section}{\refname}

\end{document}